\def\cite{\citet}
\newtheorem{lemma}{Lemma}
\newtheorem{proposition}{Proposition}
\newtheorem{theorem}{Theorem}
\newtheorem{corollary}{Corollary}
\newcommand{\qed}{\hfill \mbox{\raggedright \rule{.07in}{.1in}}}
\newenvironment{proof}{\vspace{1ex}\noindent{\bf Proof}\hspace{0.5em}}
	{\hfill\qed\\[5pt]}%
\begin{document}
\title{Customer Equilibrium and Optimal Strategies \\in an
  $M/M/1$ Queue with Dynamic Service Control\footnote{Research
  supported by the Greek Secreteriat of Research and Technology under
  Greece/Turkey bilateral research collaboration
  program. G. Dimitrakopoulos also acknwledges support by the Athens
  University of Business and Technology under the Basic Research
  Funding Program.}}
\author{Yiannis Dimitrakopoulos\\
  Department of Informatics\\
  Athens University of Economics and Business\\
  76 Patission Str., Athens, Greece 10434\\
  email: dimgiannhs@aueb.gr \and
  Apostolos Burnetas\footnote{Corresponding Author}\\
  Department of Mathematics\\
  University of Athens\\
  Panepistemiopolis, Greece 15784\\
  email: aburnetas@math.uoa.gr } \date\today

\maketitle

\begin{abstract}
  We consider the problem of customer equilibrium strategies in an
  M/M/1 queue under dynamic service control.  The service rate
  switches between a low and a high value depending on system
  congestion.  Arriving customers do not observe the system state at
  the moment of arrival. We show that due to service rate variation,
  the customer equilibrium strategy is not generally unique, and
  derive an upper bound on the number of possible equilibria. For the
  problem of social welfare optimization, we numerically analyze the
  relationship between the optimal arrival rate, which maximizes the
  overall welfare of the customers, and the equilibrium ones as a
  function of various parameter values. We finally derive analytic
  solutions for the special case where the service rate switch occurs
  when the queue ceases to be empty.  \vskip0.5cm
\noindent {\bf Keywords:} Queueing; Customer Equilibrium Strategies;
Dynamic Service Control; Positive Externalities; Social Welfare
Maximization.
\end{abstract}

\section{Introduction}
\label{sec-Introduction}
\noindent
Service rate control is a prevalent tool for dynamically adjusting the
operation of a queueing system in response to varying congestion
levels. Many models in the queueing and service management literature,
such as dynamic staffing policies, server vacation mechanisms,
etc. can be viewed as different aspects of varying the rate at which
service is provided at different times. In most cases, the philosophy
of a service control policy is to increase the service capacity when
the congestion level is high in order to alleviate excessive delays,
and reduce it at times of decreased congestion in order to lower
operating costs. Viewed in this way, a service control policy provides
a balance in the tradeoff between long customer delays and high
service costs. However, in situations where customers are sensitive to
delay and make individual decisions about entering the queue, the
policy used by the service provider may also have an indirect, albeit
significant, effect on the arrival stream.

In service systems with strategic customer behavior,
incoming customers decide independently whether to join the system or
balk, based on an individual utility function that combines the value
of obtaining the service with the cost induced by the anticipated
delay in the queue. Since a customer's delay depends on the decisions
of other customers, a game-theoretic approach is appropriate for
characterizing customer behavior and thus the properties of the
incoming stream, as the result of an equilibrium strategy. Because
customers are averse to delay, it is expected that a particular
customer's decision to enter the system is adversely affected as other
customers also decide to enter and system congestion is increased. In
economic terms, entering customers induce negative externalities, by
imposing increased delays to present and/or future arrivals. Such
situations, where in equilibrium a customer's willingness to join is a
decreasing function of the arrival rate, are referred to as Avoid the
Crowd (ATC). On the other hand, in a system where a form of dynamic
service control is implemented, the service rate is usually increased
when the system is congested. Thus, it may happen that if a customer
decides to enter the system, the other customers experience a benefit,
because the service provider is induced to increase the service
rate. Then, the net effect between the additional delay due to
increased arrivals and the higher service speed due to the server's
policy may be positive. In such cases where an arriving customer
induces positive externalities, the equilibrium is Follow the Crowd
(FTC). Both the ATC or FTC effects may appear in the same system for
different values of the arrival rate. However in most situations, the
presence of FTC is related to a dynamic service policy employed by the
service provider.

In this paper we analyze the customer equilibrium behavior in a single server Markovian queue under a simple threshold-based service rate policy. Specifically, the service rate is kept at a low value when the number of customers in the system is at or below a threshold level $T$, and turns to a high value when the system congestion is above $T$. We analyze the unobservable case where arriving customers are aware of the service policy, but do not have any information on the queue length upon arrival.
The most pronounced effect of the service control policy is that, in contrast to the simple M/M/1 queue in strategic equilibrium, the equilibrium customer strategy is not generally unique. For example, one equilibrium strategy may be such that a small fraction of incoming customers joins the system and as a result the server works at slow rate for most of the time, whereas there exists another equilibrium where the arrival rate is high and the server spends most time at the fast mode.
We show that, depending on the values of the various parameters, the number of strategic equilibria may vary from one to at most three.
We also consider the problem of determining the optimal arrival rate in a system with centrally control arrivals which, for the given service policy, maximizes the expected net benefit of all customers, and compare the optimal arrival rates to those obtained as equilibria under selfish customer behavior.

\noindent The implications of strategic customer behavior on the performance of a queueing system has been studied extensively in the recent years. Early works on the M/M/1 queue include \cite{naor} and \cite{edelsonhildebrand} for the observable and unobservable models, respectively. Many variations of the original models have been studied since, and a comprehensive review of the literature until 2003 is provided in \cite{hassinhaviv}. Among the models that have been developed and analyzed, many include some varying service rate characteristics indirectly.
For example, the capacity choice problem can be viewed as long-term service rate decision. \cite{Stenbacka_Tombak1995} consider an unobservable system with nonhomogeneous  customer delay costs and service rate decisions made by the firm. \cite{Chen_Frank2004} analyze the joint impact of a firm's service and pricing policy on customer equilibrium.

In observable models, where customers obtain full or partial
information on system congestion or anticipated delay, a varying
service rate policy may have analogous effects. \cite{armonymaglaras}
analyze the impact of announcing anticipated delays and providing a
call back option on customer joining behavior, in a call center with
two service modes. \cite{Debo2009} consider a situation where the
service rate is related to the quality of service, it is unknown to
arriving customers but can be estimated by observing the queue length.
It is shown that when the service rate and the quality of service are negatively
correlated, customers join longer queues, which is not the case
under positive correlation.

\noindent  Other systems with a varying service rate feature include vacation queues and systems in a random environment.
\cite{burnetaseconomou} consider a system with server vacations and reactivation that requires a random setup time. In the vacation model of \cite{vacation_hass}, the server resumes service after a fixed number of arrivals according to a threshold service policy. Both papers analyze equilibrium behavior with respect to several levels of information about the queue length and the state of the server.
\cite{Economou_Manou2011} analyze customer equilibrium strategies in a stochastic clearing system where the exogenous customer arrival rate and the arrival rate of the system clearing server vary according to an external environment process.

In the area of queueing control and optimization under a single decision maker, problems of service rate control have been studied extensively and in many forms. It is worth noting that the problem of social welfare maximization can be formulated as a static or dynamic admission control problem. \cite{georgeharrison} consider welfare optimization in an $M/M/1$ system with dynamic service control and constant arrival rate. They develop an asymptotic method for computing the optimal policy under average cost minimization.
\cite{ata} propose a model that jointly optimizes arrival and service rates, and develop a dynamic pricing strategy that induces the optimal arrival and service rates. \cite{hasenbein2010} consider a model of joint admission and service control in an $M/M/1$ under an average cost criterion and show that the optimal service rates are increasing with system congestion. \cite{Dim_Burnetas2011} analyze the effect of service rate flexibility on the admission control policy and the service provider's profit and derive a condition for the service flexibility to be beneficial.

The present paper proposes a direct model of the impact of dynamic service control on customer equilibrium strategies.
The service rate is dynamically adjusted with state congestion.
Under this situation, the delay function is not monotone increasing in the arrival rate, which implies that for certain
ranges of the arrival rate, joining customers impose positive externalities. Another consequence is that in general there exist more than one strategic equilibria, however we show that their number is at most three. We also consider the social welfare optimization problem and the relationship between the equilibrium and optimal arrival rates. We show via computational experiments that  optimal arrival rate lies between the extreme values under equilibrium, due to the existence of both negative and positive externalities.

The rest of the paper is
organized as follows. In Section \ref{sec-Model} we introduce the
model and the corresponding customer strategic behavior problem.
We also derive the expected delay as a function of the arrival rate.
In Section \ref{sec-Equilibrium} we consider the equilibrium strategies and derive the upper bound on the number of equilibria for general
$T$-threshold dynamic service policies.
In Section \ref{sec-Social} we explore numerically the socially optimal strategy and the comparison with the equilibrium problem.
In Section \ref{sec-Special}, we derive analytic solutions for both problems
in the special case where the service threshold $T=1$, i.e., the service rate switches to the fast mode
when the queue is not empty. Section \ref{sec-Summary} concludes.

\section{Model Description}\label{sec-Model}
\noindent We consider a single server Markovian queue under the FCFS
discipline, where potential customers arrive according to a Poisson
process with rate $\Lambda$. The service rate is dynamically
set according to a
$T$-threshold policy. Specifically, the service rate
is set to $\mu_l$ whenever the number of customers in the system is
less than or equal to $T$, and equal to $\mu_h$ otherwise, where $\mu_l<\mu_h$. There are no switching costs. Thus, the service policy is defined by the parameters $(T,\mu_l,\mu_h)$, which are known to all arriving customers.

Arriving customers are assumed identical. They make the decision whether to join or balk upon arrival, in order to maximize their expected net benefit, thus they are assumed risk neutral. Every joining customer receives a fixed
reward $R\geq0$ upon service completion, and incurs a service cost
rate $C>0$ per time unit of delay. In addition, each arriving customer has no information for the
queue length prior to entering the system.

 Since the join decisions of individual customers affect the system delay and thus the benefit of all customers, the decision problem corresponds to a symmetric game. We restrict attention to symmetric Nash equilibrium strategies defined as follows.

 Given that potential customers are not aware of the actual system state, an arriving customer has two pure strategies, either to join the system or balk. However, for the equilibrium analysis, mixed strategies
also have to be considered. A mixed strategy is defined by
$p\in[0,1]$, the probability of joining the system, where for
$p=0,\;\hbox{or}\;1$ it reduces to a pure strategy.

 Let $U(p;q)$ denote a tagged customer's expected net benefit
from using mixed strategy $p$, when all others adopt mixed
strategy $q$. Then, a mixed strategy $q^e$ is a symmetric Nash
equilibrium strategy, if $U(q^e,q^e)\geq U(q,q^e)$ for any mixed
strategy $q\in[0,1]$. The intuition is that an equilibrium strategy is the best
response against itself, so if all customers agree to follow mixed
strategy $q^e$, no one can benefit
from changing it.

Under a mixed strategy $q$ arrivals follow a Poisson process with
effective arrival rate $\lambda=\Lambda q$. The corresponding equilibrium arrival rate is denoted by $\lambda^e=\Lambda\cdot q^e$, where $q^e$ is a symmetric equilibrium strategy.
In addition, the customer's expected net benefit is equal to
\begin{equation}\label{modexpwt}U(p;q)=p\left[R-CW(\Lambda q)\right],\end{equation}
\noindent where $W(\lambda)$ denotes the total
sojourn time of a customer in the system, as a function of the
effective arrival rate $\lambda$, assuming $\lambda<\mu_h$, in order to ensure
stability.

 In the remainder of the paper, we refer to
$W(\lambda)$ as the waiting time or the delay function and, without loss of generality, we normalize $\mu_h=1$ and $C=1$.

 An alternative decision framework could be defined by assuming that all customer decisions are controlled by a central planner, who enforces a mixed strategy $q$ with the objective of maximizing the overall expected net benefit per unit time. This corresponds to the social welfare optimization problem.

 Given that a common strategy $q$ corresponds to a Poisson arrival rate $\lambda=\Lambda q$, the welfare function can be defined in terms of $\lambda$ as
\begin{equation}\label{socben}
S(\lambda)=\lambda\left[R-W(\lambda)\right]\;\hbox{for}\;\lambda\in[0,1)
\end{equation}
\noindent and an arrival rate $\lambda^*$ is socially optimal if
\begin{equation}\label{socoptpr}S(\lambda^*)=\max_{\lambda\in[0,1)} S(\lambda).\end{equation}
\noindent In general the equilibrium and optimal arrival rates rates do not coincide.

 In the following sections we analyze the equilibrium and social optimization problems and their relationship.

 In the remainder of this section we derive the expression for the delay function $W(\lambda)$.

 An
unobservable $M|M|1$ queue under a T-threshold service policy and a common mixed strategy $q$ for all customers
corresponds to a standard birth and death process with arrival rates
$\lambda_n=\lambda=\Lambda q,\;n\in{\bf{N}}$ and departure rates
$\mu_n=\left\{\begin{array}{ll} \mu_l,&n\leq T\\
1,&n>T\end{array}\right.,\;n\in{\bf{N^*}},$
where $n$ refers to the number of customers in the system.

 Using standard queueing analysis, we derive the following
expression for the waiting time
\begin{equation}\label{expwt}
W(\lambda)=\frac
{\frac{1}{\mu_l-\lambda}\{\mu_l^T-(T+1)\lambda^T+\frac{\lambda}{\mu_l-\lambda}(\mu_l^T-\lambda^T)\}+\frac{\lambda^T(T+1-T\lambda)}{(1-\lambda)^2}}
{\frac{\mu_l^{T+1}-\lambda^{T+1}}{\mu_l-\lambda}+\frac{\lambda^{T+1}}{1-\lambda}},
\end{equation}
\noindent for $\lambda\in[0,\mu_l)\cup(\mu_l,1)$.

 An analogous expression of $W(\lambda)$ can be obtained
for the case $\lambda=\mu_l$. After simplification
both expressions can be reduced to the following, in which the
denominator is strictly positive in
$\lambda\in[0,1)$:
\begin{equation}
W(\lambda)=\frac{(\mu_l-\lambda)^2g(\lambda)}{(1-\lambda)(\mu_l-\lambda)^2d(\lambda)}=\frac{g(\lambda)}{(1-\lambda)d(\lambda)},\;\hbox{for
any}\;\lambda\in[0,1), \label{simexpwt}
\end{equation}
\noindent where
\begin{eqnarray}\label{simnumewt}
\nonumber g(\lambda)&=&-(T-1)(1-\mu_l)\lambda^{T}\\&&-(1-\mu_l)\sum_{j=1}^{T-1}\mu^{j-1}_l\left[(T-(j+1))\mu_l+j-1-T\right]\lambda^{T-j}+\mu^{T-1}_l
\end{eqnarray}
\noindent and
\begin{equation}\label{simdenewt}
d(\lambda)=(1-\mu_l)\sum_{j=0}^{T-1}\mu^{j}_l\lambda^{T-j}+\mu^{T}_l.
\end{equation}
\noindent Note that, $W(0)=\frac{1}{\mu_l}>0$ and $\lim_{\lambda\rightarrow1^-}W(\lambda)=+\infty$.

 In addition, using \eqref{simexpwt} we obtain a lower
bound for the waiting time:
\begin{equation}\label{boundwt} W(\lambda)>\frac{1}{1-\lambda}, \end{equation}
where the right-hand side is the
expected waiting time in an $M|M|1$ queue
with service rate equal to $1$.

\section{Equilibrium Analysis} \label{sec-Equilibrium}
\noindent In this section, we identify the
equilibrium strategies under individual customer behavior.

 Using the standard methodology of equilibrium analysis in

 unobservable queueing models (see \cite{hassinhaviv}
p. 46-47), we derive from \eqref{modexpwt} the following equilibria:
\begin{equation}\label{equil0}
\lambda^e=0\;\hbox{if and only if}\;W(0)=\frac{1}{\mu_l}\geq R.
\end{equation}
\noindent Note that, when $R<1$, $0$ is the unique symmetric
equilibrium, because of \eqref{boundwt}. Similarly,
\begin{equation}\label{equilL}
\lambda^e=\Lambda\;\hbox{if and only if}\;W(\Lambda)<R.
\end{equation}
\noindent In addition, a necessary and sufficient condition, for
$\lambda^e=\Lambda q^e\in(0,1)$ to be an equilibrium arrival rate is that
\begin{equation}\label{condequil} U(q^e;\lambda^e)=0,\;\hbox{or equivalently,}\;W(\lambda^e)=R.
\end{equation}
\noindent Thus, the problem of identifying symmetric equilibria other than the extreme $\lambda^e=0$ or $\Lambda$, is equivalent to
finding the solutions of the equation
\begin{equation}\label{rationaleq}
W(\lambda)=R,\;\hbox{in}\;\lambda\in(0,1),\;\hbox{for}\;R\geq1.
\end{equation}
\noindent In the following we assume without loss of generality that the external arrival rate
$\Lambda\geq1$, in order to identify all possible equilibrium
arrival rates $\lambda^e$ in the interval $[0,1)$.
In the case where $\Lambda<1$, some of these equilibria may be excluded and instead we must also consider $\Lambda$ as a possible equilibrium arrival rate.

 From the properties of the delay function $W(\lambda)$ we derive the following lemma.
\begin{lemma}\label{leastroot}
\noindent If $R>\frac{1}{\mu_l}$, then the equation $W(\lambda)=R$ has at least one solution in $\lambda\in(0,1)$.
\end{lemma}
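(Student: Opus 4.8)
The plan is to prove existence by a direct application of the Intermediate Value Theorem, using only the two endpoint facts already recorded for the delay function. First I would observe that $W$ is continuous on the whole interval $[0,1)$: from the reduced form \eqref{simexpwt}, $W(\lambda)=g(\lambda)/[(1-\lambda)d(\lambda)]$ is the ratio of the polynomial $g$ to the product $(1-\lambda)d(\lambda)$, and the latter is strictly positive on $[0,1)$, as noted immediately after \eqref{simexpwt} and guaranteed by the form of $d$ in \eqref{simdenewt}. Hence $W$ has no singularities on $[0,1)$ and is continuous there.

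Next I would invoke the two boundary evaluations stated after \eqref{simdenewt}, namely $W(0)=\frac{1}{\mu_l}$ and $\lim_{\lambda\rightarrow1^-}W(\lambda)=+\infty$. Under the hypothesis $R>\frac{1}{\mu_l}$ we then have $W(0)<R$. Since $W(\lambda)\rightarrow+\infty$ as $\lambda\rightarrow1^-$, there exists some $\bar\lambda\in(0,1)$ with $W(\bar\lambda)>R$.

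Finally, applying the Intermediate Value Theorem to the continuous function $W$ on the closed interval $[0,\bar\lambda]$, where $W(0)<R<W(\bar\lambda)$, produces a point $\lambda^e\in(0,\bar\lambda)\subset(0,1)$ with $W(\lambda^e)=R$, which is the desired solution.

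I do not anticipate a genuine obstacle in this argument; the only point requiring a word of justification is the continuity of $W$ on all of $[0,1)$, and this is immediate once the strict positivity of the denominator $(1-\lambda)d(\lambda)$ is in hand. The statement is deliberately weak (\emph{at least one} solution), so no monotonicity or sign analysis of $g$ is needed here; those sharper properties would instead be the tools for \emph{bounding} the number of solutions, which is the subject of the results that follow rather than of this lemma.
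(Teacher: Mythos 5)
Your proposal is correct and follows essentially the same route as the paper's own proof: both use $W(0)=\frac{1}{\mu_l}<R$, the divergence of $W(\lambda)$ as $\lambda\rightarrow1^-$, and the Intermediate Value Theorem on an interval $[0,\bar\lambda]$. Your explicit justification of continuity via the strict positivity of $(1-\lambda)d(\lambda)$ is a small (and welcome) addition of detail that the paper leaves implicit.
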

\begin{proof}
\noindent  From the divergence of $W(\lambda)$ to infinity as $\lambda\rightarrow 1^{-}$, we derive that for any $R\geq1$, there exists an arrival rate $\lambda_0\in(0,1)$ such that $W(\lambda_0)\geq R$.

 In addition, $W(0)=\frac{1}{\mu_l}<R$ and the result follows readily from the continuity of $W(\lambda)$ in $\lambda\in[0,\lambda_0]$.\
\end{proof}
\begin{figure}
\centering
\includegraphics[width=0.9\textwidth]{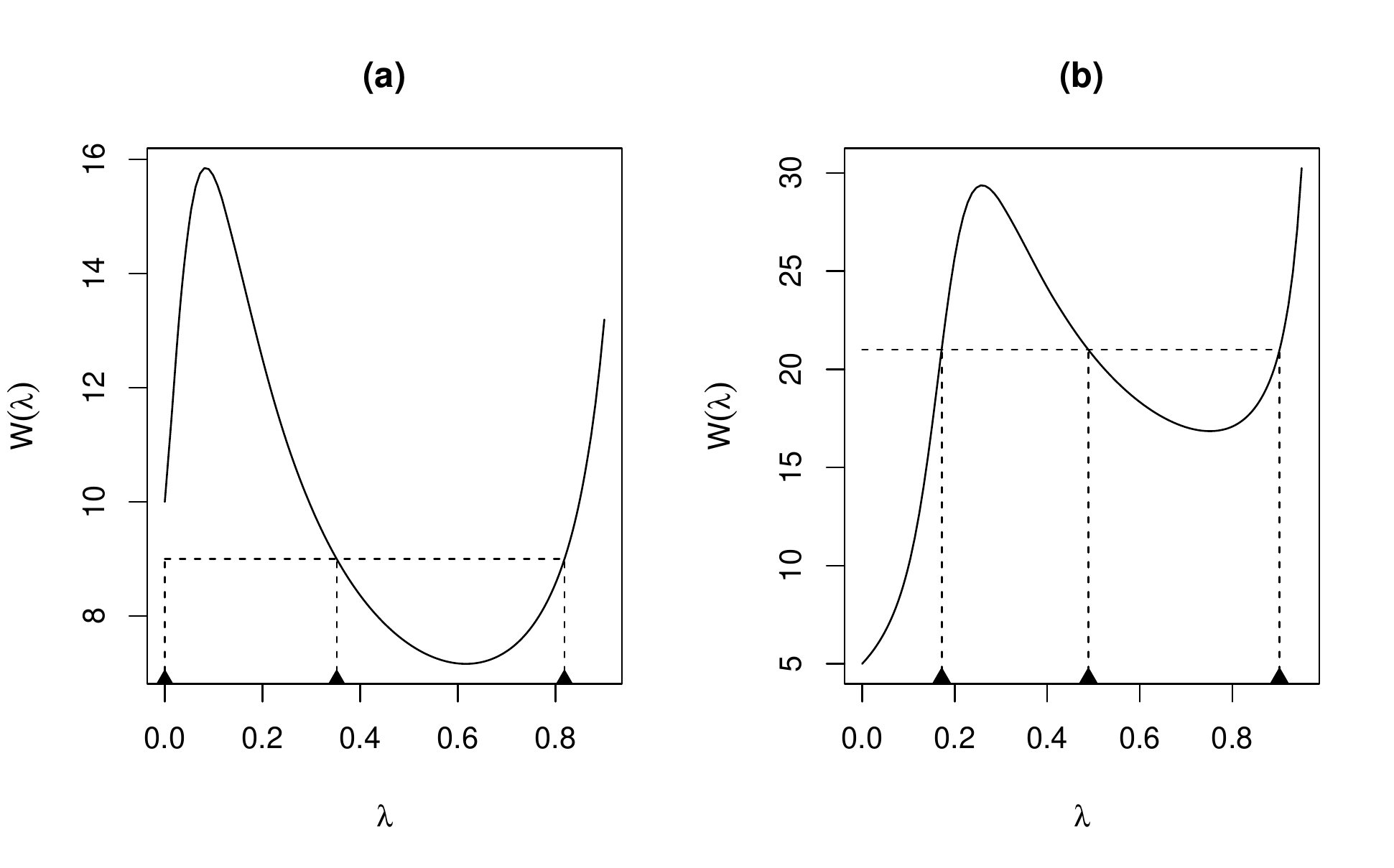}
\caption{The expected waiting time, $W(\lambda)$, for
(a)$T=3,\;\mu_l=\frac{1}{10}$ and (b)$T=10,\;\mu_l=\frac{1}{5}$}\label{wplots}
\end{figure}
\noindent From Lemma \ref{leastroot}, it follows that for $R>\frac{1}{\mu_l}$ there exists at least one positive equilibrium arrival rate. On the other hand, extensive numerical analysis
for different values of the parameters $R,\;T$ and $\mu_l$ has shown that there are at most three symmetric equilibria $\lambda^e\in(0,1)$.

 This observation is consistent with the behavior of the delay function $W(\lambda)$, as demonstrated in Figure \ref{wplots} for two typical cases of $T$-threshold policies $(T,\mu_l,1)$. In both
plots, we observe that $W(\lambda)$ increases for low and high values of $\lambda$, whereas for intermediate values of $\lambda$, it decreases. Intuitively, this is happening because for small and large values of $\lambda$, the system spends almost all time in the low or high service rate state, respectively, and the delay is approximately that of an $M|M|1$ queue with the corresponding value of $\mu$.

 On the other hand, for intermediate values of $\lambda$, the service rate switch occurs more often. In this case, increasing $\lambda$ increases the rate of arrival to the queue, but at the same time forces the server to work faster and the net effect may be a decrease in the overall delay.

 Because of the non-monotone behavior of $W(\lambda)$, the number of roots of \eqref{rationaleq} may vary from $0$ to $3$, depending on the values of $R,\;T$ and $\mu_l$.

 For example in Figure \ref{wplots}(a), for $R=9$, there are three equilibrium arrival rates, one of which equal to zero, while in Figure \ref{wplots}(b) for $R=21$, there are three strictly positive equilibria.

 Note that this discussion is solely based on observation of the delay function graph for different numerical values. The complexity of the expressions does not allow the study of $W(\lambda)$ directly.

 We instead resort to the following equivalent polynomial equation of degree $T+1$, derived by substituting \eqref{simexpwt} in \eqref{rationaleq}.
\begin{eqnarray}\label{poleq}
\nonumber H(\lambda)&=&-R(1-\mu_l)\lambda^{T+1}+(1-\mu_l)(R(1-\mu_l)+T-1)\lambda^{T}\\&&+(1-\mu_l)\sum_{j=2}^{T-1}\mu_l^{T-1-j}\left[-R\mu^2+(R+j-1)\mu-(j+1)\right]\lambda^j\\
\nonumber
&&+\mu_l^{T-2}(-2R\mu_l^2+(R+2)\mu_l-2)\lambda+\mu_l^{T-1}(R\mu_l-1)=0.
\end{eqnarray}
\noindent In general, the roots of \eqref{poleq} cannot be analytically expressed as a
function of the parameters $T,\;\mu_l$ and $R$. Furthermore, the number of roots and thus of equilibria can be as high as $T+1$. However we apply analytic techniques to bound the number of roots.

 Specifically, we will make use of the following result in analysis, referred to in the literature as
Budan's Rule.
\begin{theorem}({\it{Budan's Rule, \cite{wilf} p.94}})\label{budanrule}

 The number of zeros of a polynomial equation
\begin{equation*}
f(x)=\alpha_0+\alpha_1x+\alpha_2x^2+\cdots+\alpha_nx^n
\end{equation*}
\noindent in $(a,b)$ is either equal to $V(a)-V(b)$ or less by an
even number, where $V(x)$ is the number of variations of sign in the
sequence $(f(x),f'(x),\cdots f^{(n)}(x))$.
\end{theorem}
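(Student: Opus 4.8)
The plan is to prove the statement by tracking how the sign-variation count $V(x)$ evolves as $x$ increases continuously from $a$ to $b$. Since $f$ has degree $n$, the last entry $f^{(n)}$ is a nonzero constant, so the sequence $(f(x),f'(x),\dots,f^{(n)}(x))$ always terminates in a fixed sign, and $V(x)$ is a piecewise-constant function of $x$ that can jump only at the finitely many real roots of $f,f',\dots,f^{(n-1)}$. I would first reduce to the generic situation by assuming (after a harmless perturbation of the endpoints, or by the standard convention that zero entries are skipped) that $a$ and $b$ are not roots of any $f^{(k)}$, so that $V(a)$ and $V(b)$ are unambiguous. The entire theorem then reduces to a purely local analysis: at each point $c\in(a,b)$ where some derivatives vanish, I would compute the jump $V(c^-)-V(c^+)$ and show that it accounts exactly for the roots of $f$ located at $c$, up to a nonnegative even correction.

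The first concrete step is to fix such a point $c$ and isolate a maximal block of consecutive vanishing derivatives, say $f^{(k)}(c)=\dots=f^{(k+r-1)}(c)=0$ with $f^{(k+r)}(c)\neq 0$. The key computational input is the local Taylor behaviour: near $c$ one has $f^{(k+r-j)}(x)\approx f^{(k+r)}(c)\,(x-c)^{j}/j!$, so immediately to the right of $c$ all of $f^{(k)},\dots,f^{(k+r)}$ share the sign of $f^{(k+r)}(c)$ and contribute no internal sign variation, whereas immediately to the left their signs alternate, contributing exactly $r$ internal variations. This single observation drives both cases of the argument.

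Next I would split according to whether the block is interior ($k\geq 1$, so $f(c)\neq 0$) or leads the sequence ($k=0$, so $c$ is a root of $f$ of multiplicity $m=r$). In the interior case I would additionally track the junction with the preceding nonvanishing term $f^{(k-1)}$, whose sign is unchanged across $c$; a short parity check on $r$ then shows that this junction adjusts the count by $0$ or $\pm 1$ in exactly the way needed so that the total jump $V(c^-)-V(c^+)$ is always a nonnegative \emph{even} integer. In the leading case there is no preceding term, the left side contributes $m$ variations and the right side none, so the jump equals $m$, the multiplicity of the root. Summing these jumps over all crossing points $c\in(a,b)$ yields $V(a)-V(b)=N+(\text{even nonnegative integer})$, where $N$ is the number of roots of $f$ in $(a,b)$ counted with multiplicity; hence $N$ equals $V(a)-V(b)$ or is less by an even number, which is precisely the assertion.

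I expect the main obstacle to be the bookkeeping in the interior case: one must verify that a run of vanishing interior derivatives, taken together with its surviving neighbour $f^{(k-1)}$, always changes the number of sign variations by an even amount, \emph{uniformly} over the sign of $f^{(k-1)}(c)$ and the parity of the run length $r$. This is the one delicate lemma, and it is where the alternation-of-signs calculation must be carried out with care. Once it is in hand, the leading-term case and the global summation are routine. Since the application only requires an upper bound on the number of roots, I would keep the multiplicity convention explicit but not belabour it, as a count with multiplicity dominates the count of distinct equilibria.
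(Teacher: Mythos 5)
The paper does not prove this statement at all: Budan's Rule is imported as a black box, cited to Wilf's \emph{Mathematics for the Physical Sciences} (p.~94), and used only to bound the number of roots of $H(\lambda)$ in $(0,1)$. So there is no internal proof to compare against; what you have written is a self-contained proof of the cited theorem, and it is essentially the classical Budan--Fourier argument: decompose each critical point into maximal blocks of consecutive vanishing derivatives together with the preceding nonvanishing neighbour, use the Taylor expansion $f^{(k+r-j)}(x)\sim f^{(k+r)}(c)\,(x-c)^{j}/j!$ to get sign constancy of the block just to the right of $c$ and strict alternation just to the left, check that an interior block (with the junction to $f^{(k-1)}$, whose sign $\varepsilon$ is unchanged across $c$) contributes a jump $r+[\varepsilon\neq\sigma(-1)^{r}]-[\varepsilon\neq\sigma]$, which is $r$ for $r$ even and $r\pm1$ for $r$ odd, hence always a nonnegative even integer, while a leading block contributes exactly the multiplicity $m$; summing the jumps gives $V(a)-V(b)=N+(\text{even}\geq 0)$. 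That computation is correct. The one point to tighten is the treatment of the endpoints: ``perturbing'' $a$ and $b$ changes the quantities $V(a)$ and $V(b)$ that the theorem is actually about, so it is cleaner to observe that under the zero-skipping convention $V$ is right-continuous (the at-$c$ count equals the $c^{+}$ count, since both see the junction variation $[\varepsilon\neq\sigma]$ and nothing else), which handles the left endpoint automatically; at the right endpoint, if $f(b)=0$ with odd multiplicity the parity clause of the statement can genuinely fail (the classical theorem assumes $f(a)f(b)\neq 0$, a hypothesis Wilf's loose statement and the paper both suppress), although the inequality $N\leq V(a)-V(b)$ survives because $V(b^{-})-V(b^{+})\geq 0$. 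For the paper's application this caveat is immaterial, since only the upper bound $V(0)-V(1)\leq 3$ on the number of roots in $(0,1)$ is ever used. Your proof thus buys something the paper does not attempt --- a complete elementary derivation of the tool behind Proposition~2 --- at the cost of one edge case that should be stated as a hypothesis rather than waved off as a perturbation.
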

\noindent In our case, let $V(\lambda)$ be the number of sign
variations in the sequence

 $(H(\lambda),H'(\lambda),\cdots,
H^{(T+1)}(\lambda))$. Based on this theorem we can show the following result.
\begin{proposition}\label{rootsofpol}
For any $R\geq1$,
\begin{itemize}
\item[i.] $V(0)\leq3$
\item[ii.] The polynomial equation $H(\lambda)=0$ has at most three
roots in $\lambda\in(0,1)$.
\end{itemize}
\end{proposition}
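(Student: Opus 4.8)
The plan is to obtain part (ii) as an immediate corollary of part (i), so that essentially all the work is in bounding the sign‑variation count $V(0)$. For part (ii): applying Theorem \ref{budanrule} to the interval $(a,b)=(0,1)$, the number of roots of $H$ in $(0,1)$ equals $V(0)-V(1)$ or is smaller by an even integer; since $V(1)\ge 0$, this number is at most $V(0)-V(1)\le V(0)$. Hence once $V(0)\le 3$ is established, part (ii) follows with no further computation, and I would state it in one line.

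For part (i) the starting observation is that $V(0)$ is just the number of sign changes of the coefficient sequence. Writing $H(\lambda)=\sum_{k=0}^{T+1}c_k\lambda^k$, one has $H^{(k)}(0)=k!\,c_k$ with $k!>0$, so the signs of $(H(0),H'(0),\dots,H^{(T+1)}(0))$ coincide with those of $(c_0,c_1,\dots,c_{T+1})$; this is the $\lambda=0$ instance of Budan's count (Descartes' rule). I would first record the signs that hold for every $R\ge 1$: from \eqref{poleq} the leading coefficient is $c_{T+1}=-R(1-\mu_l)<0$ and $c_T=(1-\mu_l)(R(1-\mu_l)+T-1)>0$, so the top pair always contributes exactly one change. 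For the bulk $2\le k\le T-1$ one has $c_k=(1-\mu_l)\mu_l^{\,T-1-k}\phi(k)$ with $\phi(k)=-R\mu_l^2+(R+k-1)\mu_l-(k+1)$; the prefactor is positive and $\tfrac{d}{dk}\phi(k)=\mu_l-1<0$, so $k\mapsto\phi(k)$ is strictly decreasing and the signs of $c_2,\dots,c_{T-1}$ switch at most once, from $+$ to $-$. Thus the changes among $c_2,\dots,c_{T+1}$ come only from this single bulk switch, the $c_{T-1}\to c_T$ transition, and the forced $c_T\to c_{T+1}$ change.

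The main obstacle is to control the two lowest‑order coefficients $c_0=\mu_l^{T-1}(R\mu_l-1)$ and, above all, $c_1=\mu_l^{T-2}\bigl(-2R\mu_l^2+(R+2)\mu_l-2\bigr)$, and to show that prepending them cannot push the total past three. The sign of $c_0$ is that of $R\mu_l-1$, i.e.\ it records whether $\lambda=0$ is itself an equilibrium (cf.\ \eqref{equil0}), so I would split on $R\mu_l\le 1$ versus $R\mu_l>1$. The real difficulty is that $c_1$ does \emph{not} belong to the monotone family $\{\phi(k)\}$: its bracket equals $\phi(1)+\mu_l(2-R\mu_l)$ rather than $\phi(1)$, so $c_1$ can dip below or rise above its neighbours $c_0$ and $c_2$. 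I would exploit precisely this identity, together with $\phi(1)>\phi(2)>\cdots$, to compare $c_1$ with $c_0$ and $c_2$ and to rule out the alternating configurations that would manufacture extra variations; for instance, in the case $R\mu_l\le 1$ the same inequalities force $c_1\le 0$ (hence also $\phi(1)<0$ and the whole low‑order block is nonpositive), collapsing the count to at most two. Carrying out this comparison cleanly for every configuration of $(\operatorname{sign}c_0,\operatorname{sign}c_1,\operatorname{sign}\phi(2))$ is the crux of the argument and is where I expect essentially all of the effort to lie.
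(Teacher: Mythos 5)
Your overall architecture is the same as the paper's: part (ii) in one line from Budan's rule with $V(1)\geq 0$, and part (i) via the signs of the coefficient sequence of $H$, with the top pair $c_{T+1}<0<c_T$ fixed, the block $c_2,\dots,c_{T-1}$ governed by the strictly decreasing family $\phi(k)$ (the paper's $f(j)$, so at most one switch, from $+$ to $-$), and $c_0$, $c_1$ treated separately. Your identity for the bracket of $c_1$, namely $\phi(1)+\mu_l(2-R\mu_l)$, is correct, and your deduction that $R\mu_l\leq 1$ forces $c_1<0$ (collapsing that case to $V(0)\leq 2$) matches the paper, which handles the same point by excluding its subcase b ($H'(0)>0$, $H(0)<0$) via the inequality $-2R\mu_l^2+(R+2)\mu_l-2<R\mu_l-1$; this also underlies Corollary \ref{Cormaxeq2}.

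The genuine gap sits exactly where you say ``essentially all of the effort'' lies, and it is not just unexecuted — it cannot be executed as planned. For $R\mu_l>1$ you propose to \emph{rule out} the alternating configuration $c_0>0$, $c_1<0$, $\phi(2)>0$ with the middle family switching sign inside $\{2,\dots,T-1\}$. But this configuration is realizable: take $T=10$, $\mu_l=\tfrac{1}{2}$, $R=12$. Then $R\mu_l-1=5>0$, the bracket of $c_1$ equals $-2R\mu_l^2+(R+2)\mu_l-2=-1<0$ (here $\phi(1)=1>0$, illustrating your own point that $c_1$ is not in the monotone family), and $\phi(k)=\tfrac{1}{2}(3-k)$, so $\phi(2)>0$, $\phi(3)=0$ and $\phi(k)<0$ for $k\geq 4$. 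The coefficient signs are $+,-,+,0,-,\dots,-,+,-$, and a direct count of sign variations in this sequence gives five, not three; no comparison of $c_1$ with its neighbours can prevent this. So your plan proves the bound only in the regime $R\mu_l\leq 1$ and fails in the crux regime. Note also that the paper's route differs here: it does not try to exclude this pattern but tabulates $V(0)$ over its cases (Table \ref{tableofsigns}), recording $V(0)=3$ for the case-3/subcase-c pattern — yet the table's case-3 column places the positive middle block above the negative one, inverted relative to the decreasing $f(j)$, and the direct count above contradicts that entry. In finishing your write-up you should therefore neither rely on your exclusion plan nor on that table entry uncritically: to recover part (ii) in this configuration one needs additional information, e.g.\ a proof that $V(1)\geq 2$ there or a parity argument, rather than the coefficient count at $0$ alone.
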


\begin{proof}
\noindent{\it{i.}} By definition, $V(0)$ is the number of variations
of sign in the sequence

 $(H(0),H'(0),\ldots,H^{(n)}(0),\ldots,H^{(T+1)}(0))$, where
$H^{(n)}(0)=n!\eta_{n}\;\hbox{for}\; n=0,\ldots,T+1$
\noindent and $\eta_{n}$ denotes the coefficient of $\lambda^{n}$ in
polynomial $H(\lambda)$.

 Since $R\geq1$ and $\mu_l\in(0,1)$, we obtain the
following.

 First,
\begin{equation}
\label{signcfT+1}\eta_{T+1}=-R(1-\mu_l)<0,\;\hbox{therefore}\;
H^{(T+1)}(0)=(T+1)!\eta_{T+1}<0.
\end{equation}
\noindent Also,
\begin{equation}
\label{signcfT}\eta_{T}=(1-\mu_l)(R(1-\mu_l)+T-1)>0,\;\hbox{and
thus}\; H^{(T)}(0)=T!\eta_{T}>0.
\end{equation}
\noindent In order to derive the sign of coefficients $\eta_j$ for
$j=2,\ldots,T-1$, we let
\begin{equation}\label{pcfj}f(j)=-R\mu_l^2+(R-1)\mu_l-1-(1-\mu_l)j\end{equation}
\noindent and rewrite $H^{(j)}(0)$ as
\begin{equation}\label{cf2..T-1} H^{(j)}(0)=j!(1-\mu_l)\mu_l^{T-1-j}f(j).\end{equation}
\noindent Since $f(j)$ is decreasing in
$j$, there exists at most one sign variation in
$f(j)$ for $j\in\{2\ldots T-1\}$. Thus, there exists $j_0\in\{2\ldots T\}$,
such that for any $j=2,\ldots,T$, $f(j)\geq0$ if $j<j_0$ and $f(j)<0$ if $j\geq j_0$.
In the following sign analysis of $H^{(j)}(0)$, $0$ will be considered as positive.
 We consider three cases for $j_0$:
\begin{itemize}
\item[$1.$]If $j_0=2$, then $H^{(j)}(0)<0$
for any $j\in\{2,\ldots,T-1\}$.
\item[$2.$]If $j_0=T$, then $H^{(j)}(0)>0$
for any $j\in\{2,\ldots,T-1\}$.
\item[$3.$] If $j_0\in\{3\ldots,T-1\}$, then
$H^{(j)}(0)>0$ for $j<j_0$ and $H^{(j)}(0)<0$ for $j\geq j_0$.
\end{itemize}
\noindent For each of these, there are four subcases depending on the
 signs of $H'(0)$ and $H(0)$:
\begin{itemize}
\item[a.] $H'(0)>0$ and $H(0)>0$.
\item[b.] $H'(0)>0$ and $H(0)<0$.
\item[c.] $H'(0)<0$ and $H(0)>0$.
\item[d.] $H'(0)<0$ and $H(0)<0$.
\end{itemize}
\noindent In Table \ref{tableofsigns}, we summarize the sign
analysis of $H^{(j)}(0)$, for $j=0,\ldots,T+1$, and the value of $V(0)$ for each one combination.\\
\begin{table}
\begin{center}
\begin{tabular}{|c|cccc|cccc|cccc|}
\hline
 &\multicolumn{4}{|c|}{Case 1:$j_0=2$}&\multicolumn{4}{|c|}{Case 2:$j_0=T$}&\multicolumn{4}{|c|}{Case 3:$j_0\in\{3,\ldots,T-1\}$}\\
 \hline
 &a&b&c&d&a&b&c&d&a&b&c&d\\
\hline
   $H^{(T+1)}(0)$& $-$& $-$& $-$&  $-$& $-$& $-$& $-$& $-$ & $-$& $-$& $-$& $-$   \\
    $H^{(T)}(0)$& $+$& $+$& $+$& $+$& $+$& $+$& $+$& $+$ & $+$& $+$& $+$& $+$   \\
    $H^{(T-1)}(0)$&  $-$&  $-$& $-$& $-$&  $+$& $+$& $+$& $+$ & $+$& $+$& $+$& $+$   \\
    \vdots&   \vdots&    \vdots&   \vdots&  \vdots&    \vdots&    \vdots&   \vdots&  \vdots&    \vdots&    \vdots&   \vdots&  \vdots  \\
    $H^{(j_0-1)}(0)$& $-$&  $-$& $-$& $-$& $+$& $+$& $+$& $+$ & $+$& $+$& $+$& $+$  \\
    $H^{(j_0)}(0)$&   $-$&  $-$& $-$& $-$& $+$& $+$& $+$& $+$ & $-$& $-$& $-$& $-$\\
    \vdots&   \vdots&    \vdots&   \vdots&  \vdots&    \vdots&    \vdots&   \vdots&  \vdots&    \vdots&    \vdots&   \vdots&  \vdots    \\
    $H''(0)$&   $-$&  $-$& $-$& $-$&  $+$& $+$& $+$& $+$ & $-$& $-$& $-$& $-$  \\
    $H'(0)$&  $+$& $+$& $-$&  $-$&   $+$& $+$& $-$&  $-$ & $+$& $+$& $-$&  $-$ \\
    $H(0)$&   $+$& $-$&   $+$& $-$&    $+$& $-$&   $+$& $-$ &    $+$& $-$&   $+$& $-$ \\
\hline
    $V(0)$&3&4&3&2&1&2&3&2&3&4&3&2\\
\hline
\end{tabular}
\end{center}
\caption{Signs of $H^{(j)}(0)$ for $j=0,\ldots,T+1$ and the value of
$V(0)$.}\label{tableofsigns}
\end{table}
\noindent It follows that $V(0)\leq3$ is always true, except for two situations where subcase b holds.

 Thus to prove the Proposition, it suffices to show that subcase b cannot be true.

 Suppose that $H'(0)>0$ and $H(0)<0$. From \eqref{poleq}, we obtain that
\begin{equation*}
-2R\mu_l^2+(R+2)\mu_l-2>0\;\hbox{ and }\;R\mu_l-1<0.
\end{equation*}
\noindent However,
\begin{equation*}
-2R\mu_l^2+(R+2)\mu_l-2=(R\mu_l-1)-(2R\mu_l^2+1-\mu_l)<R\mu_l-1,
\end{equation*}
\noindent which is a contradiction. Thus subcase b is excluded and $V(0)\leq3$.

{\it{ii.}} Since $V(1)\geq0$, we obtain that
$V(0)-V(1)\leq3$, thus the result follows readily from Theorem \ref{budanrule}.\
\end{proof}
\noindent We can further observe from the Proof of Proposition \ref{rootsofpol}, that if subcase d holds, then $V(0)\leq2$,
thus, the upper bound on the number of roots of $H(\lambda)$ is reduced by one.

 In the next Corollary we provide sufficient condition for this sharper bound to hold.
\begin{corollary}\label{Cormaxeq2}
\noindent If $1\leq R<\frac{1}{\mu_l}$, then
$H(\lambda)$ has at most two roots in $(0,1)$.
\end{corollary}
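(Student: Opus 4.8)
The plan is to show that the hypothesis $1\le R<\frac{1}{\mu_l}$ forces the sign pattern of subcase d in the proof of Proposition \ref{rootsofpol}, after which essentially all the work is already done. Indeed, the remark immediately following that proof records that whenever subcase d holds, i.e.\ $H'(0)<0$ and $H(0)<0$, Table \ref{tableofsigns} gives $V(0)\le 2$ in each of the three cases for $j_0$. Thus the entire Corollary reduces to verifying the two strict inequalities $H(0)<0$ and $H'(0)<0$ under the stated range of $R$, and then invoking Budan's Rule.

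First I would read off the two lowest-order coefficients from \eqref{poleq}. The constant term is $H(0)=\eta_0=\mu_l^{T-1}(R\mu_l-1)$; since $\mu_l^{T-1}>0$, its sign equals that of $R\mu_l-1$, which is strictly negative exactly when $R<\frac{1}{\mu_l}$. This is precisely the upper bound in the hypothesis, so $H(0)<0$ is immediate. For the linear term, $H'(0)=\eta_1=\mu_l^{T-2}\big(-2R\mu_l^2+(R+2)\mu_l-2\big)$, again with a positive prefactor, so the sign of $H'(0)$ is that of the bracket. Here I would reuse the inequality already established inside the proof of Proposition \ref{rootsofpol}, namely $-2R\mu_l^2+(R+2)\mu_l-2<R\mu_l-1$. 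Combining this with $R\mu_l-1<0$, which holds by hypothesis, yields $H'(0)<0$.

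With both $H(0)<0$ and $H'(0)<0$ in hand, subcase d is in force, so $V(0)\le 2$. Since $V(1)\ge 0$ always, Budan's Rule (Theorem \ref{budanrule}) bounds the number of roots of $H$ in $(0,1)$ by $V(0)-V(1)\le 2$, or less by an even number, which is exactly the assertion of the Corollary.

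The only place I expect to have to be careful is the treatment of small thresholds, where the general coefficient formulas in \eqref{poleq} degenerate: for $T\le 2$ the sum $\sum_{j=2}^{T-1}$ is empty and the exponents $T-1,\,T-2$ are small, so I would check directly that the signs of $H(0)$ and $H'(0)$ are still governed by the same two brackets $R\mu_l-1$ and $-2R\mu_l^2+(R+2)\mu_l-2$. This bookkeeping is the main (though routine) obstacle; once it is cleared, the reduction to subcase d and the application of Budan's Rule are immediate.
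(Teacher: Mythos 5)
Your proposal is correct and follows essentially the same route as the paper: the published proof likewise observes that $1\leq R<\frac{1}{\mu_l}$ forces $\eta_0=\mu_l^{T-1}(R\mu_l-1)<0$ and $\eta_1=\mu_l^{T-2}(-2R\mu_l^2+(R+2)\mu_l-2)<0$, so that subcase d holds, $V(0)\leq 2$, and Budan's Rule gives at most two roots in $(0,1)$. Your derivation of $\eta_1<0$ via the inequality $-2R\mu_l^2+(R+2)\mu_l-2<R\mu_l-1$ from the proof of Proposition \ref{rootsofpol} is exactly the mechanism the paper relies on, and your extra care about degenerate small $T$ is a harmless refinement the paper omits.
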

\begin{proof}
\noindent If $1\leq R<\frac{1}{\mu_l}$, it follows that
$\eta_0=\mu_l^{T-1}(R\mu_l-1)<0$ and
$\eta_1=\mu_l^{T-2}(-2R\mu_l^2+(R+2)\mu_l-2)<0$. Thus, subcase d
holds and $V(0)=2$.\
\end{proof}
\noindent From Corollary \ref{Cormaxeq2} and
Proposition \ref{rootsofpol} it follows that the equation
$H(\lambda)=0$ has at most three roots in $(0,1)$, for $R\geq\frac{1}{\mu_l}$.

 In the following Theorem we
summarize the equilibrium analysis of our model.

 The proof is immediate from \eqref{equil0}, \eqref{equilL}, Lemma \ref{leastroot}, Proposition \ref{rootsofpol} and Corollary \ref{Cormaxeq2}.
\begin{theorem}\label{equilnum} For $\Lambda\geq1$:
\begin{itemize}
\item[i.] If $R<1$, the unique equilibrium
arrival rate is $\lambda^e=0$.
\item[ii.] If $1\leq R\leq\frac{1}{\mu_l}$, then $\lambda^e=0$ and in addition there may exist at most two positive equilibria.
\item[iii.] If $R>\frac{1}{\mu_l}$, then there exist at least one and at most three positive equilibria.
\end{itemize}
\end{theorem}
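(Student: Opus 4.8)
The plan is to assemble the theorem from the ingredients already in hand, so the only genuine work is a careful bookkeeping of which characterization applies in each regime, plus one boundary case. The first step is to reduce the search for equilibria to the stability interval $[0,1)$. Because we assume $\Lambda\geq1$, the full-join strategy $q^e=1$ would give $\lambda^e=\Lambda\geq1$, for which the system is unstable and $W$ diverges; by \eqref{equilL} this strategy is an equilibrium only if $W(\Lambda)<R$, which is impossible, and the same divergence rules out any equilibrium with $\lambda^e\geq1$. Hence every equilibrium arrival rate lies in $[0,1)$, and it suffices to decide two things: when $\lambda^e=0$ is an equilibrium, and how many roots $W(\lambda)=R$ has in $(0,1)$. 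By \eqref{equil0} the zero strategy is an equilibrium exactly when $W(0)=1/\mu_l\geq R$, i.e.\ when $R\leq1/\mu_l$; by \eqref{condequil}--\eqref{rationaleq} the positive equilibria are precisely the roots in $(0,1)$ of $W(\lambda)=R$, equivalently the roots of the polynomial $H(\lambda)=0$ of \eqref{poleq}.

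With this reduction, I would dispatch cases (i) and (iii) directly. For (i), if $R<1$ then the lower bound \eqref{boundwt} gives $W(\lambda)>\tfrac{1}{1-\lambda}\geq1>R$ on all of $[0,1)$, so $W(\lambda)=R$ has no root and no positive equilibrium exists; since $R<1<1/\mu_l$ the zero strategy is an equilibrium, and it is therefore unique. For (iii), if $R>1/\mu_l$ then $W(0)=1/\mu_l<R$, so the test in \eqref{equil0} fails and $\lambda^e=0$ is \emph{not} an equilibrium; Lemma \ref{leastroot} guarantees at least one root of $W(\lambda)=R$ in $(0,1)$ and Proposition \ref{rootsofpol} caps the number of roots at three, giving between one and three positive equilibria.

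Case (ii) is where I expect the only real difficulty. For $1\leq R<1/\mu_l$ the zero strategy is an equilibrium since $R\leq1/\mu_l=W(0)$, and Corollary \ref{Cormaxeq2} bounds the positive equilibria by two, so the claim is immediate there. The delicate point is the boundary value $R=1/\mu_l$, which the theorem places in case (ii) but which Corollary \ref{Cormaxeq2} does not cover: at this value $\eta_0=\mu_l^{T-1}(R\mu_l-1)=0$, so $\lambda=0$ becomes a root of $H$ and the Corollary's hypothesis (subcase d, with $\eta_0<0$) is violated. The main obstacle is thus to recover the bound of two at this single value of $R$, and I would do so by factoring $H(\lambda)=\lambda\,G(\lambda)$ with $\deg G=T$: the roots of $H$ in $(0,1)$ coincide with those of $G$, and the Budan sign sequence of $G$ at $0$ is $(\eta_1,\dots,\eta_{T+1})$, i.e.\ the sequence for $H$ with the leading entry $\eta_0$ deleted. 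Since at $R=1/\mu_l$ one has $\eta_0=0$ (positive by the paper's convention) while $\eta_1=\mu_l^{T-2}(-2R\mu_l^2+(R+2)\mu_l-2)=-\mu_l^{T-2}<0$, there is a sign change between $\eta_0$ and $\eta_1$, so deleting $\eta_0$ removes exactly one sign variation and $V_G(0)=V(0)-1\leq2$ by Proposition \ref{rootsofpol}. Budan's Rule (Theorem \ref{budanrule}) then yields at most two roots of $G$, hence of $H$, in $(0,1)$. This gives the uniform bound of two positive equilibria across all of $1\leq R\leq1/\mu_l$ and completes the proof.
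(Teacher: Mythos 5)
Your proof is correct, and its skeleton is exactly the paper's: the paper proves Theorem \ref{equilnum} in one line, declaring it immediate from \eqref{equil0}, \eqref{equilL}, Lemma \ref{leastroot}, Proposition \ref{rootsofpol} and Corollary \ref{Cormaxeq2}, which is precisely the assembly you carry out for $R<1$, $1\leq R<\frac{1}{\mu_l}$ and $R>\frac{1}{\mu_l}$. Where you genuinely go beyond the paper is the boundary value $R=\frac{1}{\mu_l}$, and your instinct there is right: Corollary \ref{Cormaxeq2} assumes the strict inequality $1\leq R<\frac{1}{\mu_l}$ because its argument needs $\eta_0=\mu_l^{T-1}(R\mu_l-1)<0$, which fails at $R=\frac{1}{\mu_l}$ where $\eta_0=0$; under the paper's convention of treating $0$ as positive one then lands in subcase c of Table \ref{tableofsigns} and Proposition \ref{rootsofpol} only yields $V(0)\leq3$, so part ii of the theorem is not literally ``immediate'' at that single point. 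Your repair is sound: with $H(\lambda)=\lambda G(\lambda)$, the Budan sequence of $G$ at $0$ is that of $H$ with the leading entry $\eta_0$ deleted, and since $\eta_0=0$ (counted positive) while $\eta_1=\mu_l^{T-2}\left(-2R\mu_l^2+(R+2)\mu_l-2\right)=-\mu_l^{T-2}<0$ at $R=\frac{1}{\mu_l}$, deleting $\eta_0$ removes exactly one variation, so $V_G(0)=V(0)-1\leq2$ and Theorem \ref{budanrule} caps the roots of $G$, hence of $H$, in $(0,1)$ at two. As a bonus, applying Budan's Rule to $G$ rather than $H$ avoids the endpoint degeneracy $H(0)=0$ altogether, since $G(0)=\eta_1\neq0$; one can also verify directly from the sign patterns in Table \ref{tableofsigns} that the truncated sequence has exactly two variations in each of the three cases for $j_0$. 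In short, your proposal reproduces the paper's proof and is more careful than the paper at the point $R=\frac{1}{\mu_l}$.
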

\noindent The existence of more than one symmetric equilibria is due to
the non-monotone behavior of the delay function $W(\lambda)$, as discussed previously.

 Also, it is worth noting that when multiple equilibria exist, then some
of them are stable, in the sense that a small perturbation of $\lambda$ will not affect
system convergence to the equilibrium rate, while others are not.

 For example, in Figure \ref{wplots}(a), there exist three
equilibrium arrival rates, of which $0$ and $\lambda^e_2$ are stable.
On the other hand, $\lambda^e_1$ is unstable, because a small increase
in $\lambda$ decreases the expected waiting time and more
customers are encouraged to join. This further increases the
arrival rate, thus the arrival rate moves away from $\lambda^e_1$. Similar
observations can be made for the example illustrated in Figure
\ref{wplots}(b). In this
case there exist three strictly positive equilibria, with
$\lambda^e_2$ being unstable, as it corresponds to the decreasing part of
$W(\lambda)$. Finally, in both examples, for values of
$R$ greater than the minimum value of $W(\lambda)$, the maximum
equilibrium arrival rate is always
stable.\\

\section{Social Optimization}\label{sec-Social}

\noindent In this section, we analyze the problem of
social welfare optimization, where the decision problem for a central planner
is to obtain a socially optimal
strategy $\lambda^*$, which maximizes customers' overall expected net benefit per unit time $S(\lambda)$, as defined in \eqref{socoptpr}.

 From \eqref{simexpwt}, the social welfare function can be expressed as
\begin{equation}\label{subsocben}
S(\lambda)=\lambda\left[R-\frac{g(\lambda)}{(1-\lambda)d(\lambda)}\right],\;\hbox{for}\;\lambda\in[0,1),
\end{equation}
\noindent where $g(\lambda)$ and $d(\lambda)$ are defined in
\eqref{simnumewt} and \eqref{simdenewt}, respectively.

 From \eqref{subsocben}, $S(\lambda)$ is continuous
and differentiable in $(0,1)$, thus, the first-order condition for the
maximization problem is
\begin{equation}\label{focsoc}
S'(\lambda)=0\Leftrightarrow R-W(\lambda)-\lambda W'(\lambda)=0.
\end{equation}
\noindent An analytic characterization of the optimal solution based on \eqref{focsoc} is not tractable because of the complexity of the expressions of
$W(\lambda)$ and $W'(\lambda)$. We thus resort to numerical analysis in order to explore the behavior the optimal arrival rate $\lambda^*$ and its relationship to the equilibrium arrival rates.\\
More specifically, we examine how the equilibrium and optimal arrival rates are affected by changing the values of parameters $T$, $\mu_l$ and $R$. Since, in general, the equilibrium arrival rate is not unique, in the diagrams we show the minimum and maximum equilibrium arrival rates, denoted as $\lambda^e_{\min}$ and $\lambda^e_{\max}$ respectively, and where the socially optimal arrival rate lies with respect to the range of individual equilibria. In addition, we examine the behavior of the maximum social benefit $S(\lambda^*)$ with respect to the same parameters.

 In the beginning, we present numerical results with respect to the service threshold $T$, which is an easily controlled operational characteristic of the system. We next proceed to the sensitivity analysis with respect to parameters $R$ and $\mu_l$, for specific values of the service threshold. We consider the cases $T=10$ and $T=3$, the same as those used for the graphs of the delay function $W(\lambda)$, in Figure \ref{wplots}.

 In Figure \ref{Tsoceq} we present the results of the sensitivity analysis with respect to the service threshold $T$, for $\mu_l=\frac{1}{4}$ and $R=25$. We observe that the equilibrium and optimal arrival rates are decreasing in $T$. This is expected because higher values of the service threshold increase the time that the server spends in the low rate and customers decision to join leads to increased delays due to system congestion, discouraging more customers from joining.

 Moreover, for low and high values of $T$, the system behaves approximately as an $M|M|1$ queue with service rate equal to $\mu_l$, and $1$, respectively. Therefore, for such $T$ values the equilibrium arrival rate is unique and the optimal arrival rate is lower than the equilibrium because of the negative externalities. On the other hand, for intermediate values of $T$ there are multiple equilibria, as discussed in Section \ref{sec-Equilibrium}. In these cases the optimal rate is between the two extreme values $\lambda^e_{\min}$ and $\lambda^e_{\max}$ of the equilibrium. Thus, whether the externalities are positive or negative, and thus whether a tax or a subsidy is required to induce efficiency, is not known in advance but it depends on which of the equilibrium rates will materialize. Finally, we observe in Figure \ref{Tsoceq} that the maximum social benefit $S(\lambda^*)$ is decreasing in $T$ for fixed values of $R$ and $\mu_l$, as a consequence of the behavior of the optimal arrival rate with respect to $T$.

 Next, we proceed to numerical results with respect to parameters $R$ and $\mu_l$, considering first the case of
$T=10$. Using the dynamic service policy $(T,\mu_l,1)$ for $T=10$ and $\mu_l=\frac{1}{5}$, we obtain $\lambda^*$ and the corresponding equilibria with respect to $R$, which are illustrated in Figure \ref{RT10soceq}. Note that, the equilibrium analysis for this service policy has been presented in Figure \ref{wplots}(b) and, thus, both diagrams must be considered together, in order to explore the relationship between $\lambda^*$ and the equilibria. We see that, $\lambda^*\leq\lambda^e_{\max}$ for low and high values of the reward, in which case the equilibrium arrival rate is unique, as showed in Figure \ref{wplots}(b). On the other hand, for intermediate values of $R$, where there exist two or three equilibria, we observe that $\lambda^e_{\min}\leq\lambda^*\leq\lambda^e_{\max}$. In this case, customers can impose either positive or negative externalities in equilibrium, as we mentioned before.

 Regarding the sensitivity of the social optimal arrival rate and the corresponding equilibria with respect to the low service rate for $R=20$, which is presented in Figure \ref{T10figsoc}, we see that $\lambda^e_{\min}\leq\lambda^*\leq\lambda^e_{\max}$ for low values of $\mu_l$, whereas $\lambda^*\leq\lambda^e_{\max}$ for higher values. This happens because the service rate switch option is no longer beneficial for joining customers, as the low service rate increases to high values. Therefore, the expected waiting time becomes monotone increasing as $\mu_l$ increases, corresponding to the delay function of a typical $M|M|1$ queue, which results in the existence of a unique equilibrium strategy with rate larger than $\lambda^*$.

 Moreover in both figures, we observe that there exist a discontinuity point with respect to $R$ or $\mu_l$, which corresponds to the existence of two equal maxima for different values of the reward or the low service rate, respectively. This is due to the fact that the social benefit function $S(\lambda)$ can be bimodal for some values of $R$ or $\mu_l$.

 Considering the behavior of $\lambda^*$ as well as both equilibria, we observe that they are increasing in $R$, which is intuitive considering the fact that customers either as a whole or acting independently are generally better off as the reward increases, and, thus tend to join more. This is also reflected in the behavior of the maximum social benefit $S(\lambda^*)$, which is always increasing in $R$, as observed in Figure \ref{RT10soceq}.

 In contrast, regarding the behavior of the social optimal arrival rate with respect to $\mu_l$, we observe in Figure \ref{T10figsoc} that it is slightly decreasing for small values, and, then becomes increasing, after a discontinuity jump. For this set of values of $T,\;R,\;\mu_l$ the social benefit function is bimodal in $\lambda$, with two local maxima. Of these the lower corresponds to a policy that accepts a small proportion of customers and thus keeps delay low by avoiding congestion.

 The other local maximum occurs at a higher value of $\lambda$, which means that more customers are accepted, so the server is forced to work at the high rate most of the time, thus keeping the delay moderate. For every value of $\mu_l$ one of the two local maxima supersedes the other and determines the optimal value of $\lambda$. For small values of $\mu_l$ the lower mode value is so low that almost no one is accepted, thus the higher mode is preferred. On the other hand as $\mu_l$ increases, the delay becomes more moderate even when the server works at low rate, which makes the lower mode preferred. The discontinuity point corresponds to the value of $\mu_l$ where the two local maxima are equal. As $\mu_l$ increases above the discontinuity point, the low mode still persists but nevertheless more customers are accepted under the optimal policy.

 Note that regardless of the non-monotone behavior of $\lambda^*$, the optimal welfare value $S(\lambda^*)$ is increasing in $\mu_l$, since higher values of this service rate can only improve system performance.

 Similar observations can be made for the case $T=3$. For $\mu_l=\frac{1}{10}$ and varying $R$, we observe in Figure \ref{soceqRT3} that $\lambda^*\leq\lambda^e_{\max}$ for $R<7$ and $R>16$, whereas
$\lambda^e_{\min}\leq\lambda^*\leq\lambda^e_{\max}$ for intermediate values of $R$. Moreover, regarding the behavior of the social
optimal arrival rate, $\lambda^*$ has a discontinuity point in
$R=7$. This can be explained by an analogous bimodality argument as in the previous case. In addition, the social optimal arrival rate $\lambda^*$, as well as the social optimal benefit $S(\lambda^*)$, are increasing in
$R$ as expected.

 We finally note that the discontinuities discussed above do not always occur. For other sets of values of $T,\;R,\;\mu_l$, the social welfare function may be unimodal in $\lambda$, or even if it is bimodal one of the two modes may persist as optimal. In these cases the optimal arrival rate is continuous and monotone in the corresponding parameter. For example Figure \ref{figsocT3ml}, shows the dependence on $\mu_l$ for $T=3$ and $R=5$, where there exists a unique equilibrium for any $\mu_l$, which is continuous and increasing in $\mu_l$, as is also the socially optimal rate. In this case, the equilibrium is always greater than the socially optimal rate, whenever they take non-zero 

\begin{figure}
\centering
\includegraphics[width=0.9\textwidth]{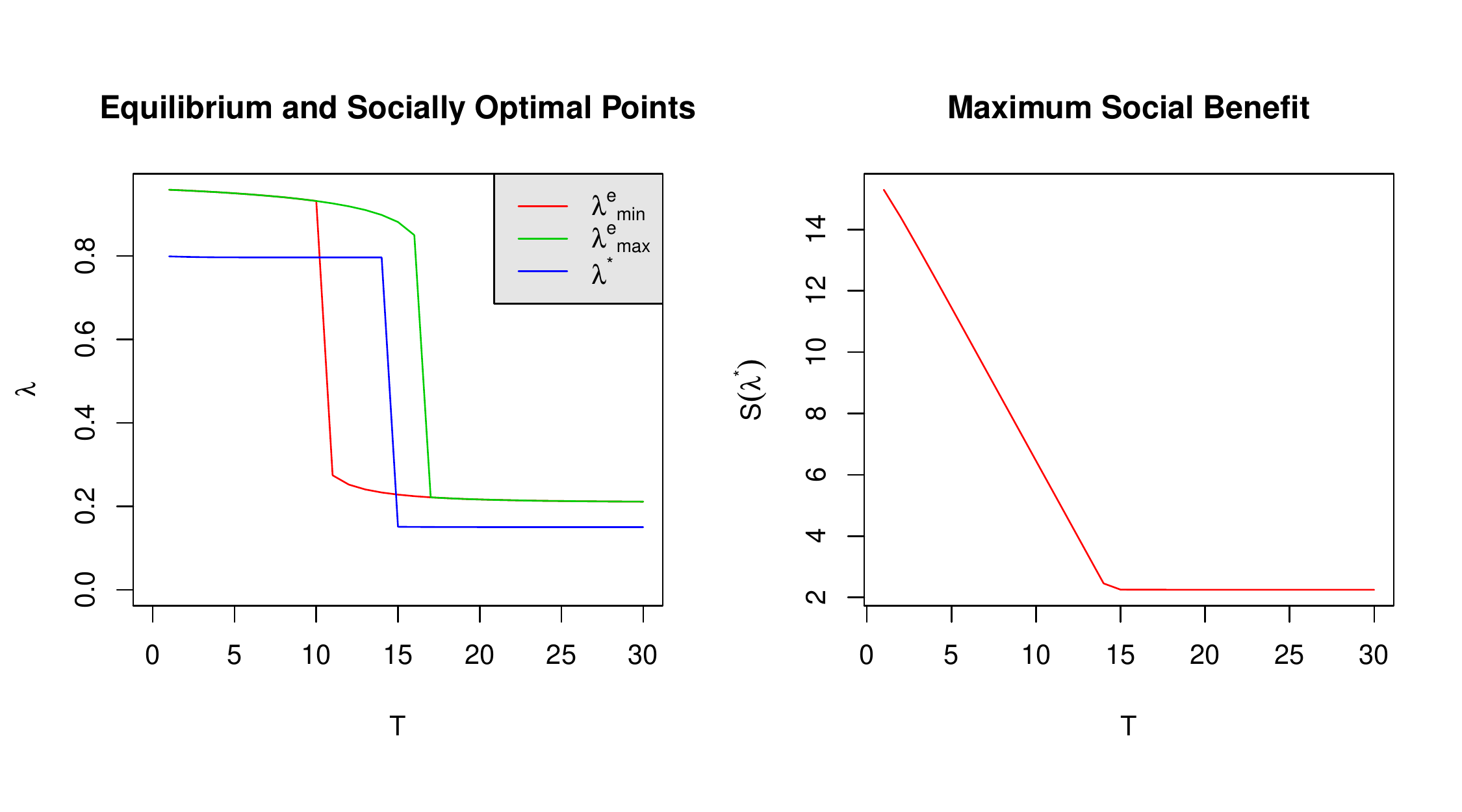}
\caption{Equilibrium and Socially Optimal rates as a function of $T$ for $R=25$, $\mu_l=\frac{1}{4}$.}\label{Tsoceq}
\end{figure}
\begin{figure}
\centering
\includegraphics[width=0.9\textwidth]{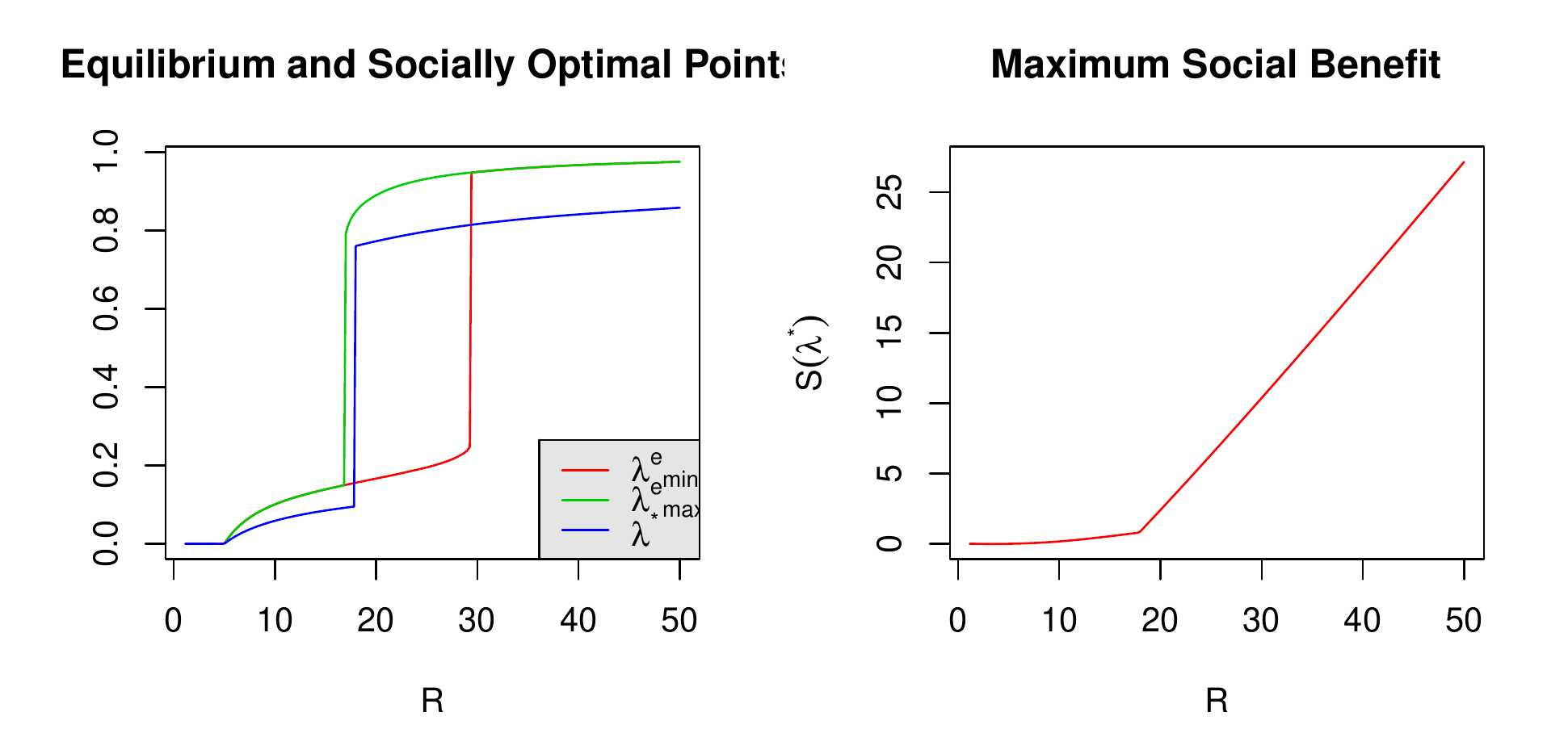}
\caption{Equilibrium and Socially Optimal rates as a function of $R$ for $T=10$,$\mu_l=\frac{1}{5}$.}\label{RT10soceq}
\end{figure}
\begin{figure}
\centering
\includegraphics[width=0.9\textwidth]{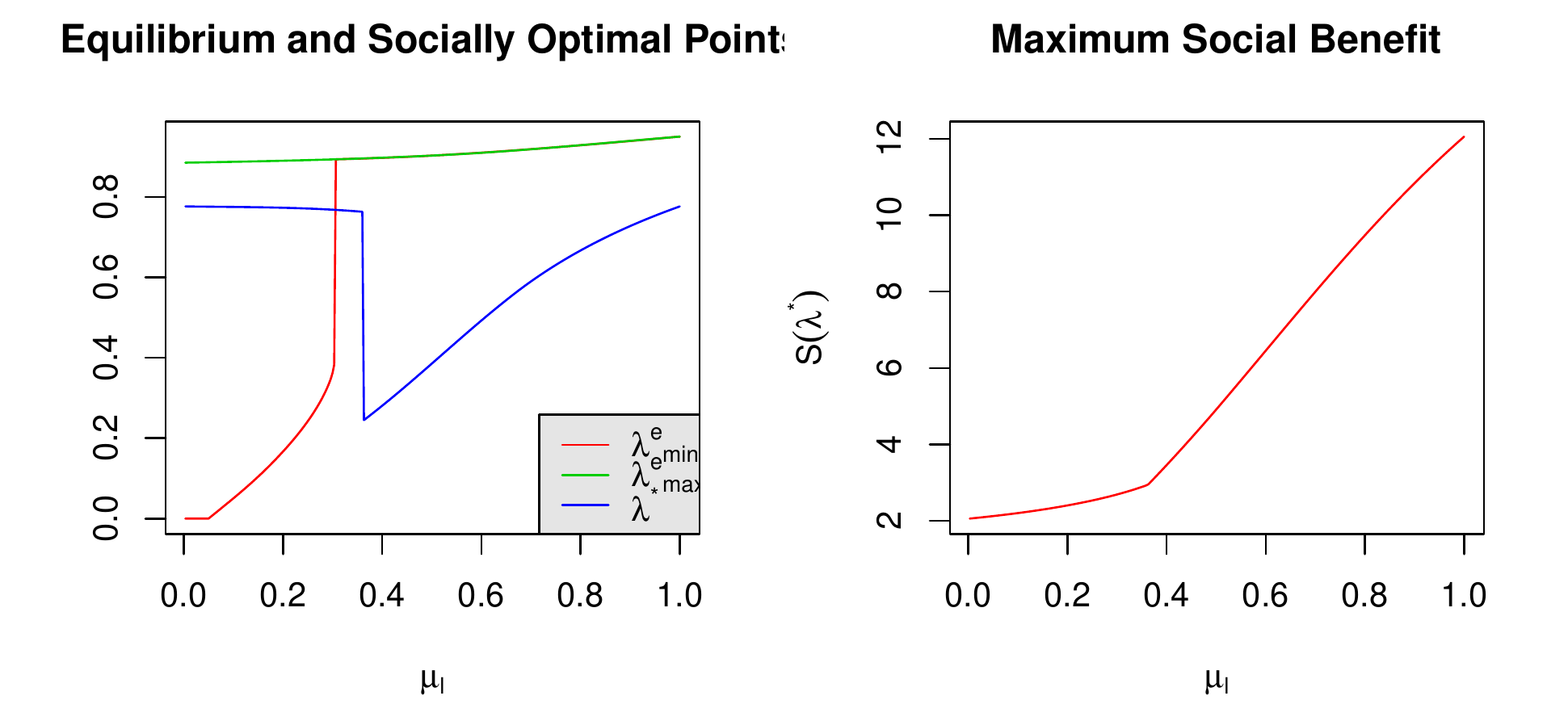}
\caption{Equilibrium and Socially Optimal rates as a function of $\mu_l$ for $T=10$, $R=20$.}\label{T10figsoc}
\end{figure}
\begin{figure}
\centering
\includegraphics[width=0.9\textwidth]{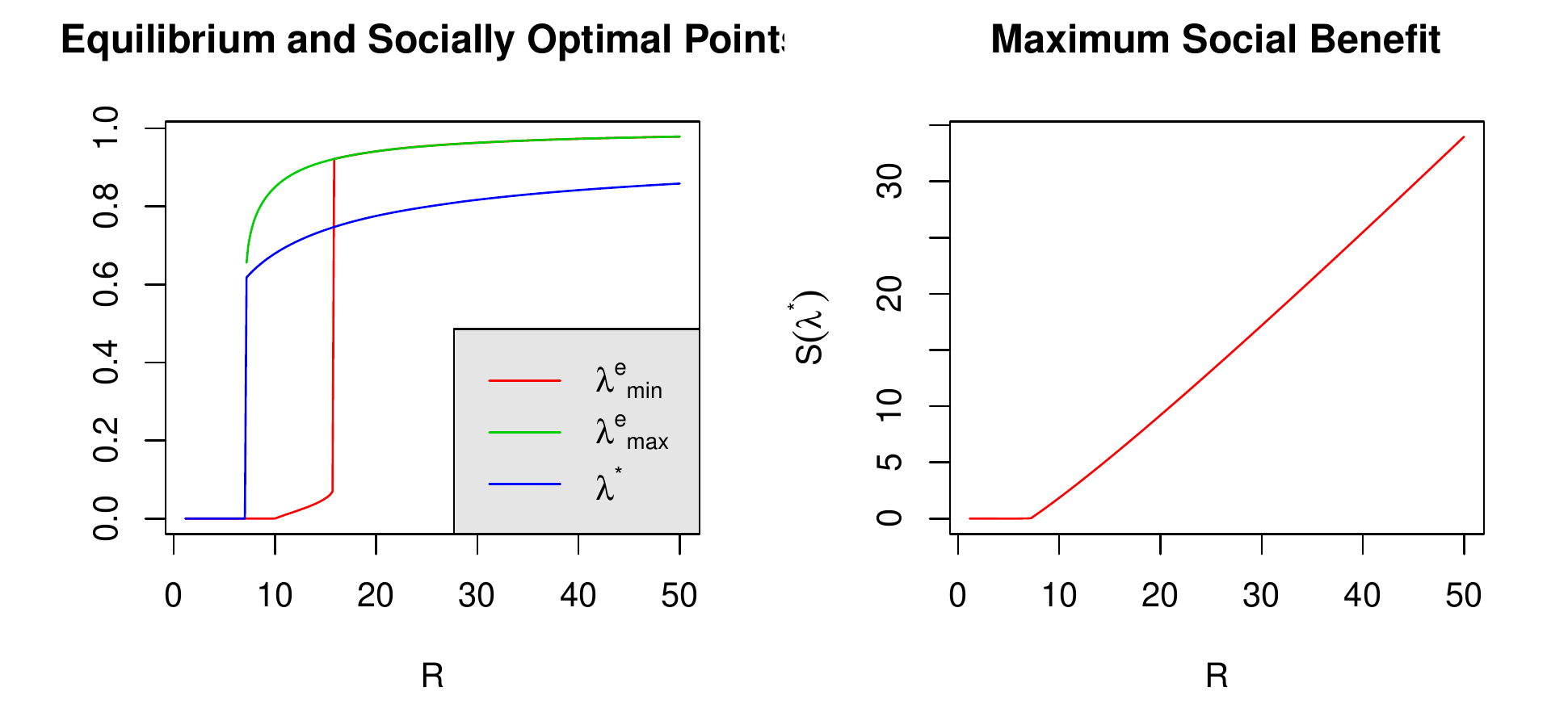}
\caption{Equilibrium and Socially Optimal rates as a function of $R$ for $T=3$, $\mu_l=\frac{1}{10}$.}\label{soceqRT3}
\end{figure}
\begin{figure}
\centering
\includegraphics[width=0.9\textwidth]{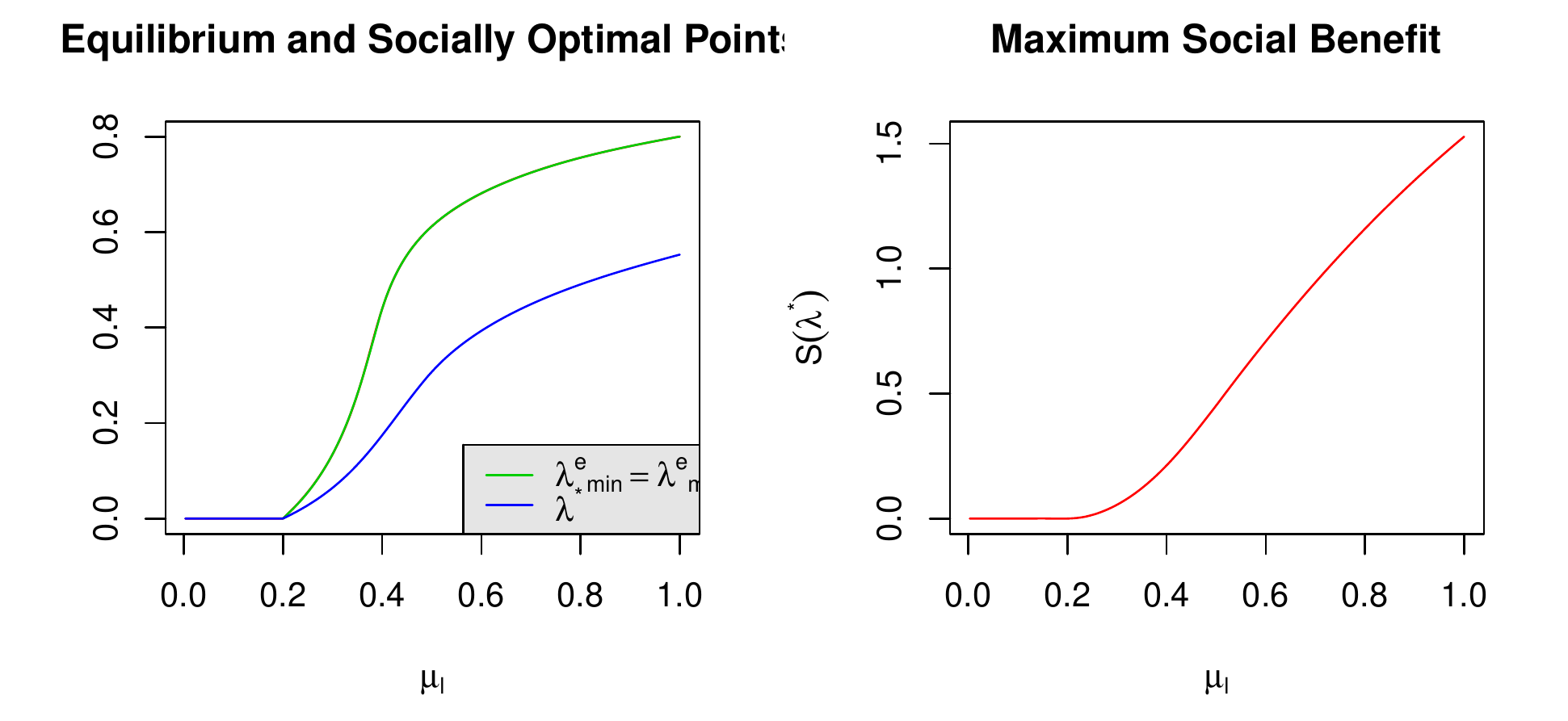}
\caption{Equilibrium and Socially Optimal rates as a function of $\mu_l$ for $T=3$, $R=5$.}\label{figsocT3ml}
\end{figure}

\section{The Case $T=1$}\label{sec-Special}

\noindent In this section, we analyze the case where the service threshold $T$ is set to $1$ and,
thus the service policy prescribes that the server uses the low
rate as long as the queue is empty and switches to the high rate as soon as a customer joins the queue.

 This policy may be appropriate when there is a large difference between the two service rate values, and an assurance must be given to the customers that when there is at least one person waiting in the queue the service is expedited.

 From the analysis point of view the expressions of the delay function is now considerably simpler, permitting more complete analytical results.

 As in the general model, we first characterize the symmetric equilibrium strategies and then proceed to social benefit maximization.

 For $T=1$, the delay function in \eqref{simexpwt} becomes
\begin{equation}\label{expwt_T1}
W_1(\lambda)=\frac{1}{(1-\lambda)[\mu_l+\lambda(1-\mu_l)]},\;\lambda\in[0,1).
\end{equation}
\noindent In the following lemma we present some properties of the delay function $W_1(\lambda)$. The proof follows from simple calculus and is omitted.
\begin{figure}
\centering
\includegraphics[width=0.9\textwidth]{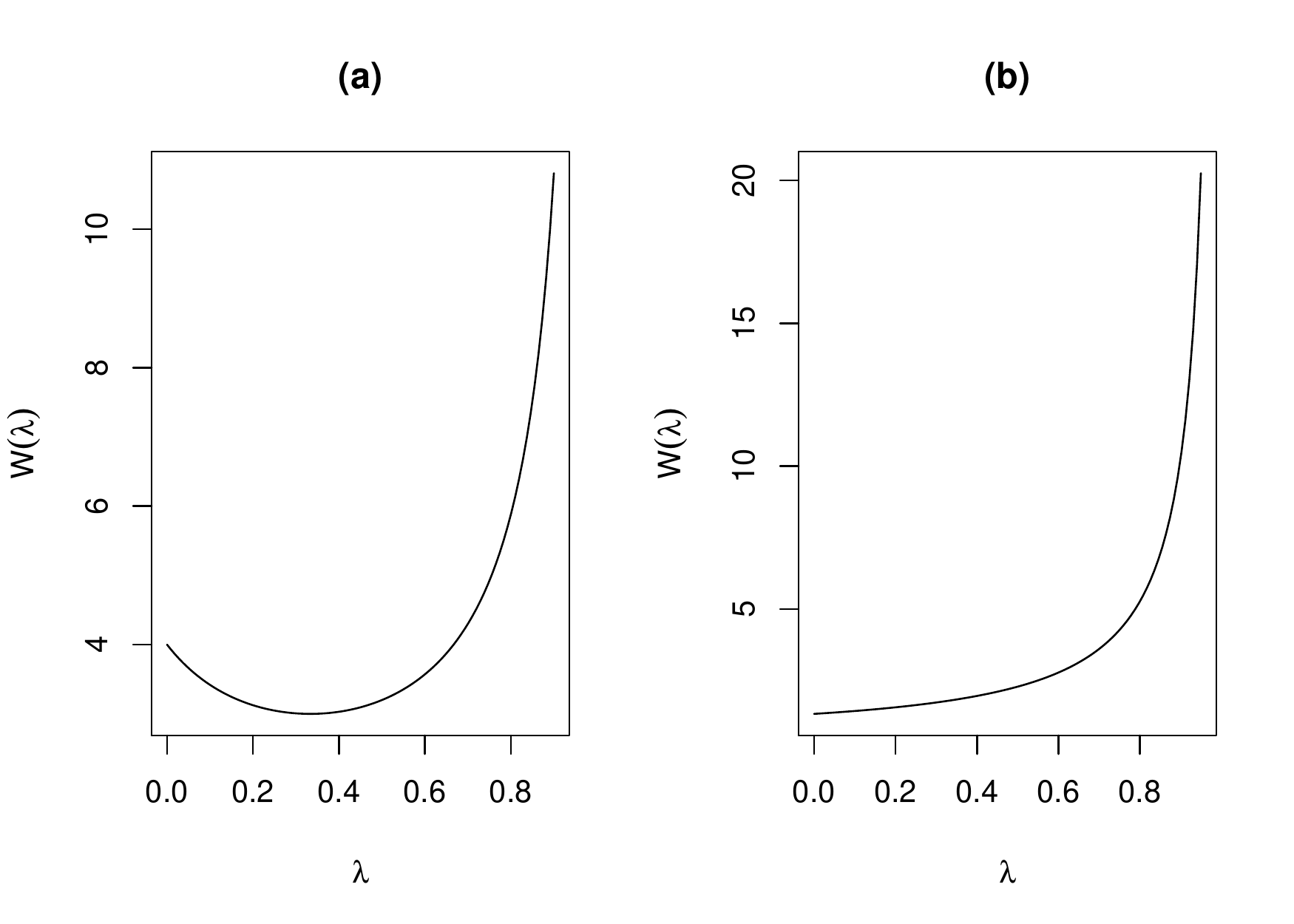}
\caption{The expected waiting time $W(\lambda)$ for $T=1$ and (a)$\mu_l<\frac{1}{2}$ or (b)$\mu_l>\frac{1}{2}$}\label{T1wplots}
\end{figure}
\begin{lemma}\label{ewtpropT1}
\begin{itemize}
\item[]
\item[i.] $W_1(0)=\frac{1}{\mu_l}$ and $\lim_{\lambda\rightarrow1}W_1(\lambda)=+\infty$
\item[ii.] If $\mu_l>\frac{1}{2}$, $W_1(\lambda)$ is increasing in $\lambda\in(0,1)$.
\item[iii.] If $\mu_l<\frac{1}{2}$, $W_1(\lambda)$ is decreasing
in $\lambda\in(0,\frac{1-2\mu_l}{2(1-\mu_l)})$ and is increasing in
$\lambda\in(\frac{1-2\mu_l}{2(1-\mu_l)},1)$.
\item[iv.] $W_1(\lambda)$ is convex in $\lambda\in(0,1)$.
\end{itemize}
\end{lemma}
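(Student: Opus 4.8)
The plan is to write $W_1(\lambda)=1/D(\lambda)$, where $D(\lambda)=(1-\lambda)[\mu_l+(1-\mu_l)\lambda]$ is the denominator in \eqref{expwt_T1}, and to reduce all four claims to elementary properties of $D$. Expanding, $D(\lambda)=-(1-\mu_l)\lambda^2+(1-2\mu_l)\lambda+\mu_l$, which is a concave (downward-opening) quadratic since $-(1-\mu_l)<0$. Note first that both factors of $D$ are strictly positive on $[0,1)$, so $D(\lambda)>0$ there and $W_1$ is well defined and smooth. Part (i) is then immediate by substitution: $W_1(0)=1/D(0)=1/\mu_l$, and as $\lambda\to1^-$ the factor $(1-\lambda)\to0^+$ while $\mu_l+(1-\mu_l)\lambda\to1>0$, so $D(\lambda)\to0^+$ and hence $W_1(\lambda)\to+\infty$.

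For the monotonicity claims (ii) and (iii), the key observation is that, since $D>0$ on $[0,1)$, the composition $W_1=1/D$ is strictly decreasing exactly where $D$ is strictly increasing and strictly increasing where $D$ is strictly decreasing. The concave quadratic $D$ has its vertex at the root of $D'(\lambda)=-2(1-\mu_l)\lambda+(1-2\mu_l)=0$, namely $\lambda^\ast=\frac{1-2\mu_l}{2(1-\mu_l)}$. If $\mu_l>\tfrac12$ then $1-2\mu_l<0$, so $\lambda^\ast<0$ and $D$ is strictly decreasing throughout $(0,1)$; hence $W_1$ is strictly increasing, giving (ii). If $\mu_l<\tfrac12$ then $\lambda^\ast>0$, and the inequality $\lambda^\ast<1$ reduces to $1<2$, so $\lambda^\ast\in(0,1)$; here $D$ increases on $(0,\lambda^\ast)$ and decreases on $(\lambda^\ast,1)$, so $W_1$ decreases on $(0,\lambda^\ast)$ and increases on $(\lambda^\ast,1)$, which is exactly (iii) with the stated threshold.

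For the convexity claim (iv), I would use the partial-fraction form $W_1(\lambda)=\frac{1}{1-\lambda}+\frac{1-\mu_l}{\mu_l+(1-\mu_l)\lambda}$, in which each summand is the reciprocal of a positive affine function of $\lambda$ on $(0,1)$ and is therefore convex, so their sum is convex; this makes (iv) a one-line observation. Alternatively, one can differentiate directly and write $W_1''=[2(D')^2-D\,D'']/D^3$, and since $D>0$ and $D''=-2(1-\mu_l)<0$, the numerator equals $2(D')^2+2(1-\mu_l)D>0$, giving $W_1''>0$.

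There is no substantial obstacle here—the result is routine calculus, as the authors note. The only points requiring care are verifying that the critical point $\lambda^\ast$ lies in $(0,1)$ in the case $\mu_l<\tfrac12$, and getting the sign correct in the convexity computation; the partial-fraction decomposition is the cleanest organizing device, since it renders convexity transparent and also recovers the same critical value $\lambda^\ast$ when the two summand derivatives are balanced.
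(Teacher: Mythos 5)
Your proof is correct and complete; the paper itself omits the proof of this lemma, stating only that it ``follows from simple calculus,'' and your argument is exactly the routine calculus the authors intend, so there is no divergence of approach to report. All your computations check out --- the positivity of $D(\lambda)=(1-\lambda)[\mu_l+(1-\mu_l)\lambda]$ on $[0,1)$, the vertex $\lambda^\ast=\frac{1-2\mu_l}{2(1-\mu_l)}$ with the verification that $\lambda^\ast\in(0,1)$ when $\mu_l<\tfrac12$, and the partial-fraction form $W_1(\lambda)=\frac{1}{1-\lambda}+\frac{1-\mu_l}{\mu_l+(1-\mu_l)\lambda}$, which indeed renders convexity immediate and is consistent with $W_1(0)=\tfrac{1}{\mu_l}$.
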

\noindent The plot of $W_1(\lambda)$ for two representative values of $\mu_l$ is shown in Figure \ref{T1wplots}.\\
\vskip0.1cm
\noindent {\bf{Equilibrium Analysis}} For the equilibrium analysis we follow the same steps as in Section
\ref{sec-Equilibrium}. Now, equation \eqref{rationaleq} becomes quadratic:
\begin{equation}\label{poleqT1}
H_1(\lambda)=R(1-\mu_l)\lambda^2+R(2\mu_l-1)\lambda+1-R\mu_l=0.
\end{equation}
\noindent This equation $H_1(\lambda)=0$ can be solved explicitly and may have either one or two roots in $[0,1)$ (the case of zero roots is excluded because of Lemma \ref{leastroot}), depending on the values of $R$ and $\mu_l$.

 Performing a detailed but otherwise standard analysis of the roots, we obtain the following theorem on the number of symmetric equilibria.
\begin{theorem}\label{equilstrT1}
For $\Lambda\geq1$ the following hold.
\begin{itemize}
\item[i.] If $R<1$, then $0$ is the unique equilibrium.
\item[ii.] If $1\leq R\leq2$ and $\mu_l\leq\frac{1}{R}$, then $0$ is the unique equilibrium.
\item[iii.] If $R>2$ and $\mu_l<1-\frac{R}{4}$, then $0$ is the unique equilibrium.
\item[iv.] If $R>2$ and $\mu_l=1-\frac{R}{4}$, then the equilibrium arrival rates are $0$ and $\lambda_0^e$.
\item[v.] If $R>2$ and $1-\frac{R}{4}<\mu_l<\frac{1}{R}$, then the equilibrium arrival are $0,\;\lambda_1^e$ and $\lambda_2^e$.
\item[vi.] If $R\geq1$ and $\mu_l>\frac{1}{R}$, then $\lambda_1^e$ is the unique equilibrium.
\end{itemize}
\noindent where
\begin{eqnarray}
\lambda_1^e=\frac{R(1-2\mu_l)+\sqrt{R(R-4(1-\mu_l))}}{2R(1-\mu_l)},\\
\lambda_2^e=\frac{R(1-2\mu_l)-\sqrt{R(R-4(1-\mu_l))}}{2R(1-\mu_l)}\\
\end{eqnarray}
\noindent and
\begin{equation}
\lambda_0^e=\frac{(1-2\mu_l)}{2(1-\mu_l)}.
\end{equation}
\end{theorem}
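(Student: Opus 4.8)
The plan is to reduce everything to the single quadratic $H_1$ in \eqref{poleqT1} together with the boundary condition for $\lambda^e=0$, and then to count and locate the roots of $H_1$ in $(0,1)$ by combining the explicit discriminant with the shape of $W_1$ recorded in Lemma \ref{ewtpropT1}. First I would record the three quantities that drive the whole analysis. The endpoint value is $H_1(1)=R(1-\mu_l)+R(2\mu_l-1)+1-R\mu_l=1>0$, so $1$ is never a root and, since the leading coefficient $R(1-\mu_l)$ is positive, the parabola opens upward and stays positive at $\lambda=1$. The discriminant simplifies to $\Delta=R\bigl(R-4(1-\mu_l)\bigr)$, so $H_1$ has real roots exactly when $\mu_l\ge 1-\tfrac{R}{4}$. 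Finally, the initial value $H_1(0)=1-R\mu_l$ has a sign governed by the comparison of $\mu_l$ with $1/R$. The vertex sits at $\lambda_v=\tfrac{1-2\mu_l}{2(1-\mu_l)}$, which lies in $(0,1)$ precisely when $\mu_l<\tfrac12$ and which coincides with $\lambda_0^e$.

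The boundary equilibrium is settled directly from \eqref{equil0}: $\lambda^e=0$ is an equilibrium if and only if $W_1(0)=1/\mu_l\ge R$, i.e. $\mu_l\le 1/R$. This already disposes of case (i), since $R<1$ forces $\mu_l<1<1/R$, while \eqref{boundwt} gives $W_1(\lambda)>1/(1-\lambda)\ge 1>R$ and hence rules out any interior root; the same condition is the reason $0$ appears as an equilibrium in (ii)--(v) but disappears in (vi).

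To locate the interior roots I would split on the sign of $\mu_l-\tfrac12$ and invoke Lemma \ref{ewtpropT1}. If $\mu_l\ge\tfrac12$ then $W_1$ is increasing on $(0,1)$, so $W_1(\lambda)=R$ has exactly one root there when $R>W_1(0)=1/\mu_l$ (that is, $\mu_l>1/R$) and none when $\mu_l\le 1/R$. If $\mu_l<\tfrac12$ then $W_1$ is U-shaped with minimizer $\lambda_v=\lambda_0^e$, and $W_1(\lambda_v)=R$ is equivalent to $\Delta=0$, i.e. to $\mu_l=1-\tfrac{R}{4}$; thus the sign of $\Delta$ tells whether the horizontal line at height $R$ misses the graph ($\Delta<0$: no interior root), is tangent to it ($\Delta=0$: the double root $\lambda_0^e$), or cuts it twice ($\Delta>0$). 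In the last situation the sign of $H_1(0)$, equivalently whether $\mu_l\lessgtr 1/R$, decides whether the smaller root is positive or negative; positivity of the larger root and the bound $\lambda_1^e<1$ come for free, because $H_1(1)>0$ together with $\lambda_v<1$ forces every real root to lie below $1$.

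It then remains to assemble the six cases, and here the only real care is the compatibility of the three thresholds $1/R$, $1-\tfrac{R}{4}$, and $\tfrac12$. The key inequality is $1-\tfrac{R}{4}\le 1/R$, with equality exactly at $R=2$, which reduces to $(R-2)^2\ge0$; this is what makes the regions in (ii)--(vi) partition the admissible parameter range without gaps or overlaps, and in particular places the tangency value $1-\tfrac{R}{4}$ strictly below $1/R$ whenever $R>2$. Reading the root counts of the previous paragraph against these thresholds then yields cases (ii)--(vi) directly, with the quadratic formula supplying the explicit $\lambda_1^e,\lambda_2^e$ and the vertex supplying $\lambda_0^e$. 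The main delicacy, and the place where I expect the bookkeeping to be most error-prone, is the transition at $\mu_l=1/R$, where one root of $H_1$ passes through $0$: as $\mu_l$ increases through this value the smaller root $\lambda_2^e$ turns negative and simultaneously $0$ ceases to satisfy $W_1(0)\ge R$, so that the total equilibrium count changes consistently and the passage from case (v) to case (vi) is clean.
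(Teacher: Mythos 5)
Your proposal is correct and takes essentially the same route the paper intends: the paper offers no detailed proof of Theorem \ref{equilstrT1}, saying only that $H_1(\lambda)=0$ ``can be solved explicitly'' and that the result follows from a ``detailed but otherwise standard analysis of the roots,'' and your discriminant/vertex/endpoint analysis of the quadratic --- $H_1(1)=1>0$, $\Delta=R\bigl(R-4(1-\mu_l)\bigr)$, vertex at $\lambda_0^e$, sign of $H_1(0)=1-R\mu_l$ --- combined with the boundary condition \eqref{equil0} and the bound \eqref{boundwt} for case (i), is exactly that standard analysis made explicit. All the computations check out, including the threshold ordering $1-\frac{R}{4}\leq\frac{1}{R}$ via $(R-2)^2\geq0$ and the use of Lemma \ref{ewtpropT1} to organize the cases, so the argument is complete and consistent with the paper.
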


\noindent In Figure \ref{equilstrfigT1} we present the number of symmetric equilibria in different regions of the $R-\mu_l$ plane,
corresponding to the cases listed in Theorem \ref{equilstrT1}.
\begin{figure}
\centering
\includegraphics[width=0.9\textwidth]{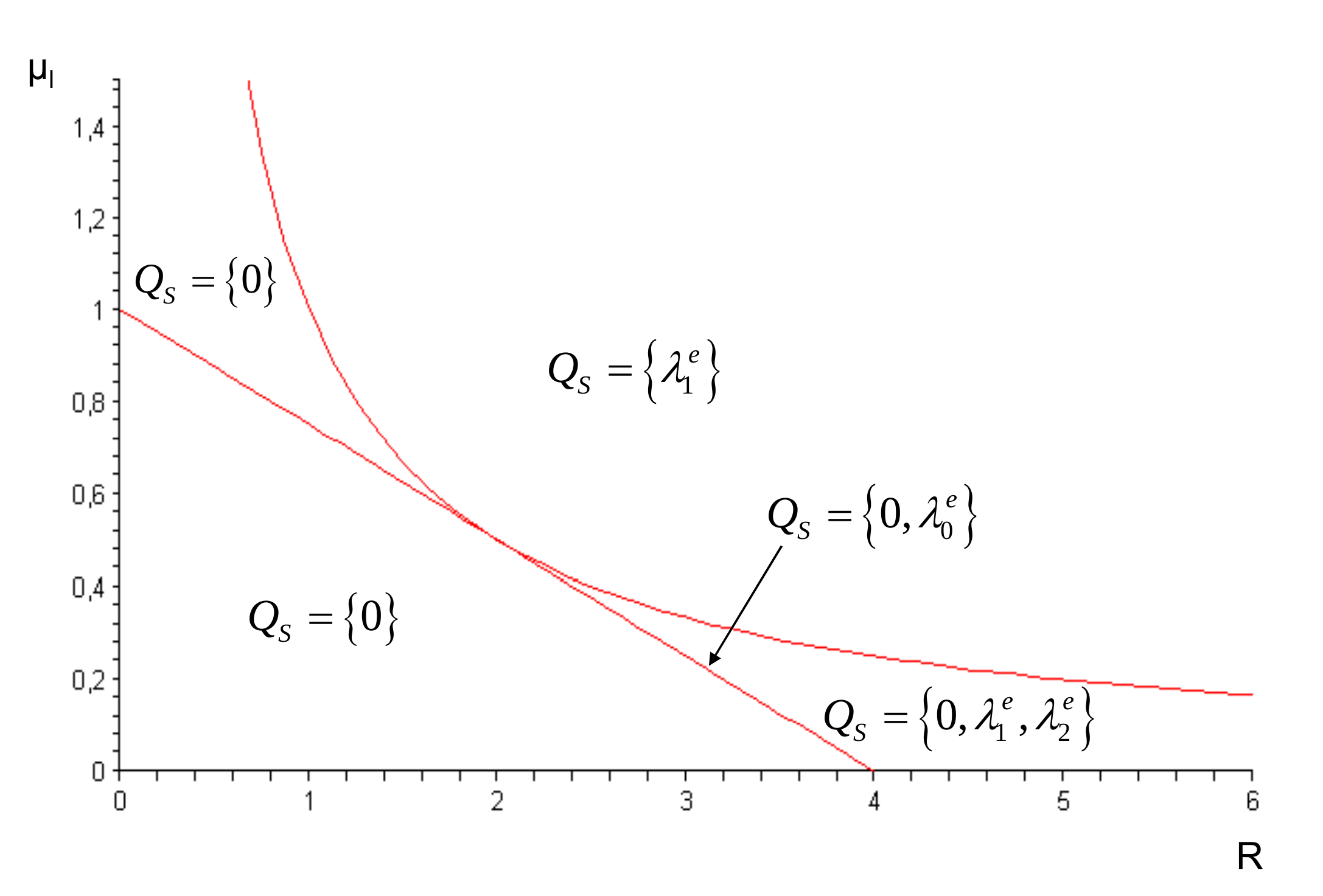}
\caption{Symmetric Equilibria for the Case T=1, for several values
of the parameters $\mu_l$ and $R$}\label{equilstrfigT1}
\end{figure}
\vskip0.1cm
\noindent {\bf{Social Optimization}} Proceeding to the social
optimization problem, we let $S_1(\lambda)$ be the social welfare
function in the case of $T=1$. Thus,
\begin{equation}\label{socfuncT1}
S_1(\lambda)=\lambda\left[R-W_1(\lambda)\right],
\end{equation}
\noindent In the following lemma we derive some
properties of $S_1(\lambda)$.
\begin{lemma}\label{propS1}
\begin{itemize}
\item[]
\item[i.] If $\mu_l<\frac{1}{2}$, then $S_1(\lambda)$ is strictly
concave in $\lambda\in[0,1)$.
\item[ii.] If $\mu_l=1-\frac{R}{4}<\frac{1}{2}$, then $S_1(\lambda)$ is
decreasing and concave in $\lambda\in(\lambda_0^e,1)$.
\item[iii.] If $1-\frac{R}{4}<\mu_l<\frac{1}{2}$, then $S_1(\lambda)$ is
decreasing and concave in $\lambda\in(\lambda_1^e,1)$.
\end{itemize}
\end{lemma}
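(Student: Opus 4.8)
The plan is to reduce all three parts to the sign of $S_1''$ together with a one-sided monotonicity check, so I would first record a common computation and then split into the three regimes. Differentiating $S_1(\lambda)=\lambda\left[R-W_1(\lambda)\right]$ from \eqref{socfuncT1} twice gives
\[
S_1'(\lambda)=R-W_1(\lambda)-\lambda W_1'(\lambda),\qquad S_1''(\lambda)=-\bigl(2W_1'(\lambda)+\lambda W_1''(\lambda)\bigr),
\]
so concavity of $S_1$ on an interval is equivalent to $2W_1'+\lambda W_1''\ge0$ there. I would lean on two facts: (a) the convexity $W_1''>0$ on $(0,1)$ from Lemma \ref{ewtpropT1}(iv); and (b) that any positive equilibrium satisfies $W_1(\lambda^e)=R$, which is \eqref{condequil} specialized to $T=1$ (equivalently $H_1(\lambda^e)=0$ in \eqref{poleqT1}). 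Fact (b) is what makes $S_1'$ computable in closed form at the endpoints $\lambda_0^e$ and $\lambda_1^e$.

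For parts (ii) and (iii) this programme goes through cleanly. The structural observation is that the turning point of $W_1$ found in Lemma \ref{ewtpropT1}(iii), namely $\frac{1-2\mu_l}{2(1-\mu_l)}$, is exactly $\lambda_0^e$, and that $\lambda_1^e=\lambda_0^e+\frac{\sqrt{R(R-4(1-\mu_l))}}{2R(1-\mu_l)}>\lambda_0^e$. Hence on $(\lambda_0^e,1)$ and on $(\lambda_1^e,1)$ the delay function sits in its increasing branch, so $W_1'\ge0$; together with (a) this makes both terms of $2W_1'+\lambda W_1''$ non-negative, giving $S_1''<0$ and concavity. For the monotonicity I would evaluate $S_1'$ at the left endpoint via (b): in case (ii) the double-root condition $\mu_l=1-\frac R4$ makes $\lambda_0^e$ simultaneously the minimizer of $W_1$ (so $W_1'(\lambda_0^e)=0$) and an equilibrium with $W_1(\lambda_0^e)=R$, whence $S_1'(\lambda_0^e)=R-R-0=0$; in case (iii) the point $\lambda_1^e$ lies strictly past the minimizer, so $W_1'(\lambda_1^e)>0$ and $S_1'(\lambda_1^e)=-\lambda_1^e W_1'(\lambda_1^e)<0$. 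Since $S_1'$ is decreasing on the interval by the concavity just established, its value never exceeds the (non-positive) left-endpoint value, so $S_1'<0$ to the right of it and $S_1$ is decreasing.

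Part (i) is where I expect the real work, since it claims concavity on all of $[0,1)$. The plan is to prove $2W_1'+\lambda W_1''\ge0$ on $[0,1)$ directly: substituting $W_1=1/D$ with $D(\lambda)=(1-\lambda)\left[\mu_l+(1-\mu_l)\lambda\right]$ from \eqref{expwt_T1} and clearing the strictly positive factor $D(\lambda)^3$ turns the inequality into the non-negativity on $[0,1)$ of a single cubic $P(\lambda)$ with coefficients explicit in $\mu_l$ and $R$. I would then try to certify $P\ge0$ using that $P'(\lambda)>0$, so that $P$ is increasing and changes sign at most once on $(0,1)$; the verification then comes down to controlling $P$ at the two ends of the interval, which is precisely where the hypothesis $\mu_l<\frac12$ must be used. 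I expect this endpoint sign control, and in particular the behaviour of $P$ near $\lambda=0$, to be the main obstacle, with the convexity input (a) the natural lever for settling it.
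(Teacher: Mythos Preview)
Your treatment of parts (ii) and (iii) is correct and matches the paper's proof in substance. The paper obtains the sign of $S_1'$ by rewriting $R-W_1(\lambda)=-H_1(\lambda)/[(1-\lambda)(\mu_l+(1-\mu_l)\lambda)]$ and using that $H_1\ge 0$ with a (double or simple) root at the equilibrium; you instead evaluate $S_1'$ at the left endpoint via $W_1(\lambda^e)=R$ and then propagate by concavity. Both routes use exactly the same ingredients: $W_1'>0$ to the right of the minimizer $\lambda_0^e=\tfrac{1-2\mu_l}{2(1-\mu_l)}$, and $W_1''>0$ from Lemma~\ref{ewtpropT1}(iv).

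Part (i), however, is a trap. The printed hypothesis ``$\mu_l<\tfrac12$'' is a typo; the paper's own proof of (i) begins with ``If $\mu>\tfrac12$'' and invokes Lemma~\ref{ewtpropT1}(ii) (monotonicity of $W_1$), and the downstream use in Proposition~\ref{socoptrate} also applies the concavity in the regime $\mu_l>\tfrac12$. With $\mu_l>\tfrac12$ the argument is the one-liner you already have for (ii)--(iii): $W_1'>0$ and $W_1''>0$ on $[0,1)$, hence $S_1''=-2W_1'-\lambda W_1''<0$. Under the stated hypothesis $\mu_l<\tfrac12$ the claim is simply false: $S_1''(0)=-2W_1'(0)=2(1-2\mu_l)/\mu_l^2>0$, so $S_1$ is strictly convex near $0$. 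This is exactly the ``behaviour of $P$ near $\lambda=0$'' that you flagged as the main obstacle; it is not an obstacle but a counterexample, and no amount of endpoint sign control will salvage the cubic approach. Replace the hypothesis in (i) by $\mu_l>\tfrac12$ and the proof collapses to the same two-line argument as the concavity part of (ii) and (iii).
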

\begin{proof}
\noindent The first two derivatives of $S_1(\lambda)$ are
\begin{eqnarray}
\label{1stderS1}S_1'(\lambda)=R-W_1(\lambda)-\lambda W_1'(\lambda)\\
\label{2ndderS1}S_1''(\lambda)=-2W_1'(\lambda)-\lambda
W_1''(\lambda).
\end{eqnarray}
\noindent {\bf{i.}} If $\mu>\frac{1}{2}$, $W_1(\lambda)$ is
increasing and convex in $\lambda\in[0,1)$, as stated in Lemma
\ref{ewtpropT1}. By \eqref{2ndderS1}, it follows that
$S_1''(\lambda)<0\;\forall\lambda\in[0,1)$, and
the statement holds.

 {\bf{ii.}} The expression of $S_1'(\lambda)$ in
\eqref{1stderS1} can be rewritten as
\begin{equation}\label{alt1stderS1}
S_1'(\lambda)=-\frac{H_1(\lambda)}{(1-\lambda)(\mu_l+(1-\mu_l)\lambda)}-\lambda
W_1'(\lambda)
\end{equation}
\noindent where $H_1(\lambda)$ is the polynomial in
\eqref{poleqT1}.

 Now, for $\mu_l=1-\frac{R}{4}<\frac{1}{2}$, we obtain that
$H_1(\lambda)>0,\;\forall\lambda\in[0,\lambda_0^e)\cup(\lambda_0^e,1)$.
Also, $W_1(\lambda)$ is increasing in $\lambda\in(\lambda_0^e,1)$.
By \eqref{alt1stderS1} and \eqref{2ndderS1} we obtain that
$S_1'(\lambda)<0$ and $S_1''(\lambda)<0$ for
$\lambda\in(\lambda_0^e,1)$. Thus, $S_1(\lambda)$ is decreasing and
concave in $\lambda\in(\lambda_0^e,1)$.

 {\bf{iii.}} Similarly to {\it{ii.}} it can be shown that $S_1(\lambda)$ is decreasing and concave in
$\lambda\in(\lambda_1^e,1)$.\
\end{proof}
\noindent In the following proposition we prove a result on the relationship between the optimal and equilibrium arrival rates. In this case, $\lambda^e$ refers to a positive and stable equilibrium arrival rate.
\begin{proposition}\label{socoptrate}
For any $\mu_l\in(0,1)$ and $R\geq1$ there exists an equilibrium $\lambda^e$ such that $\lambda^*\leq\lambda^e$ for any social optimal
arrival rate $\lambda^*\in[0,1)$.
\end{proposition}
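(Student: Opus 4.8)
The plan is to exploit the identity that links the social first-order condition to the equilibrium condition, together with the convexity of $W_1$ from Lemma \ref{ewtpropT1}(iv). The guiding observation is that the socially optimal rate differs from an equilibrium precisely through the externality term $\lambda W_1'(\lambda)$ appearing in \eqref{1stderS1}: at any positive equilibrium $\lambda^e$ one has $W_1(\lambda^e)=R$, so
\[
S_1'(\lambda^e)=R-W_1(\lambda^e)-\lambda^e W_1'(\lambda^e)=-\lambda^e W_1'(\lambda^e).
\]
Thus the sign of $S_1'(\lambda^e)$ is controlled entirely by $W_1'(\lambda^e)$, i.e. by whether $\lambda^e$ sits on the increasing (ATC) or the decreasing (FTC) branch of the delay function.

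First I would let $\lambda^e$ be the \emph{largest} equilibrium arrival rate. Since $W_1\to\infty$ as $\lambda\to1^-$ and, by Lemma \ref{ewtpropT1}, $W_1$ is eventually increasing with its unique minimum at $\lambda_0^e=\frac{1-2\mu_l}{2(1-\mu_l)}$ (which is $\le0$ precisely when $\mu_l\ge\frac12$), the largest solution of $W_1(\lambda)=R$ always lies on the increasing branch, where $W_1'(\lambda^e)>0$. This is exactly the positive, stable equilibrium referred to in the statement. The display above then gives $S_1'(\lambda^e)\le0$, with strict inequality unless $\lambda^e=\lambda_0^e$ is the degenerate double root of case (iv) of Theorem \ref{equilstrT1}.

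Next I would show that $S_1$ is strictly decreasing on all of $(\lambda^e,1)$, which immediately forces $\lambda^*\le\lambda^e$ for every maximizer. This is where convexity enters: from \eqref{2ndderS1}, $S_1''(\lambda)=-2W_1'(\lambda)-\lambda W_1''(\lambda)$, and since $W_1''>0$ everywhere and $W_1'\ge0$ on the increasing branch $(\lambda_0^e,1)\supseteq(\lambda^e,1)$, we obtain $S_1''<0$ there. Hence $S_1'$ is strictly decreasing on $(\lambda^e,1)$ and, starting from $S_1'(\lambda^e)\le0$, stays negative. This is precisely the content of parts (ii)--(iii) of Lemma \ref{propS1} (part (i) covering $\mu_l>\frac12$, where the entire domain is the increasing branch), so I would invoke those directly rather than rederive the concavity. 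Because $S_1$ is strictly decreasing beyond $\lambda^e$, no global maximizer can exceed $\lambda^e$, even in the bimodal situations of Section \ref{sec-Social}, which gives the inequality for \emph{every} socially optimal $\lambda^*$.

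Finally I would dispose of the degenerate regime where no positive equilibrium exists, namely $R<\min_\lambda W_1(\lambda)$. Evaluating $W_1$ at its minimizer yields $\min W_1=4(1-\mu_l)$ when $\mu_l<\frac12$ and $W_1(0)=\frac1{\mu_l}$ when $\mu_l\ge\frac12$; in either case $W_1(\lambda)>R$ throughout $[0,1)$, so $S_1(\lambda)=\lambda[R-W_1(\lambda)]<0=S_1(0)$ for $\lambda>0$ and the unique optimum is $\lambda^*=0$, coinciding with the unique equilibrium $\lambda^e=0$. The hard part will not be any single estimate but the bookkeeping across the six parameter regions of Theorem \ref{equilstrT1}: correctly identifying in each which equilibrium is the stable one on the increasing branch, and reconciling the boundary case $R=4(1-\mu_l)$ (where $S_1'(\lambda^e)=0$ rather than $<0$) and the no-positive-equilibrium case with the assertion that the dominating $\lambda^e$ is positive and stable.
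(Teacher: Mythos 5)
Your proof is correct and takes essentially the same route as the paper's: the identity $S_1'(\lambda^e)=R-W_1(\lambda^e)-\lambda^e W_1'(\lambda^e)=-\lambda^e W_1'(\lambda^e)$ at a positive equilibrium, combined with the decreasing-and-concave behavior of $S_1$ beyond the largest equilibrium (Lemma \ref{propS1}, parts (i)--(iii), with the same case split on $\mu_l$ versus $\tfrac12$ and $R$ versus $4(1-\mu_l)$), your ``largest root lies on the increasing branch of $W_1$'' observation being a unified repackaging of the paper's three cases. Two minor points in your favor: your final paragraph explicitly settles the regime $\mu_l<1-\tfrac{R}{4}$, $\mu_l<\tfrac12$ (no positive equilibrium, so $\lambda^*=\lambda^e=0$), which the paper's case enumeration silently omits, and you correctly apply Lemma \ref{propS1}(i) to $\mu_l>\tfrac12$, consistent with its proof rather than the sign typo in its statement.
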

\begin{proof}
\noindent In order to prove the statement of Proposition
\ref{socoptrate} we consider the
cases of Lemma \ref{propS1} for the value of $\mu_l$.

 First, assume $\mu_l>\frac{1}{2}$. By
the concavity of the social welfare function $S_1(\lambda)$, as
stated in Lemma \ref{propS1}, it follows that there exists a unique
social optimal arrival rate $\lambda^*$, which maximizes
$S_1(\lambda)$.

 In addition, if $\mu_l\leq\frac{1}{R}$ the unique
equilibrium arrival rate is $\lambda^e=0$ with
$S_1'(0)=R-W(0)=R-\frac{1}{\mu_l}\leq0$. Thus, $S_1'(0)\leq
S_1'(\lambda^*)$ and by the monotonicity of $S_1'(\lambda)$ we
obtain that $\lambda^*\leq0$, which results to $\lambda^*=0$. If, on
the contrary, $\mu_l>\frac{1}{R}$, the unique equilibrium arrival
rate is $\lambda^e=\lambda^e_1$ with $S_1'(\lambda^e_1)<0$. Once
again, by the monotonicity of $S_1'(\lambda)$ we prove that
$\lambda^*\leq\lambda^e$ for any $\lambda^*$.

 Next, we consider the case where
$\mu_l=1-\frac{R}{4}<\frac{1}{2}$. By Lemma \ref{propS1}, we obtain
that $S_1(\lambda)$ is decreasing and concave in
$\lambda\in(\lambda_0^e,1)$. Thus, $\lambda^*\leq\lambda_0^e$ for any $\lambda^*$, where $\lambda_0^e$ is the stable
and positive equilibrium arrival rate, as stated in Proposition
\ref{equilstrT1}.

 Finally, for the case of $1-\frac{R}{4}<\mu_l<\frac{1}{2}$,
it follows similarly that $\lambda^*\leq\lambda_1^e$ for any optimal arrival rate
$\lambda^*$, where
$\lambda_1^e$ is the stable positive equilibrium arrival rate.\
\end{proof}

\section{Conclusions and Extensions}
\label{sec-Summary}
\noindent In this paper, we considered the problem of customer
equilibrium joining behavior in an unobservable $M|M|1$ queue with
dynamically adjusted service rate according to a threshold type
service policy, known to all customers.  We proved that there exist
multiple equilibria, three at most, due to the presence of both
positive and negative externalities for different values of the
effective arrival rate. For the social welfare optimization problem,
we explored the relationship between the optimal and the equilibrium
arrival rates in several numerical experiments for different values of
system parameters. We demonstrated that in cases where there exist
multiple equilibria, the socially optimal point lies between the
extreme equilibrium values, whereas in all other cases it is always
less than the equilibrium arrival rate.

This work can be extended in several directions. First, it would be of
interest to determine the equilibrium strategies under different
levels of information on system congestion or the status of the server
upon customer arrival. Some preliminary work has shown that the
equilibrium analysis in these problems is significantly more complex,
and non-threshold equilibria may exist.  In terms of the form of the
service control policy, one might consider an alternative where
instead of increasing the service speed with congestion, one or more
standby servers are activated. Such a policy is more relevant in
situations where the service is provided by human workers.  Other
extensions include generally distributed service requirements and
dynamic service speed, series of queues with different parameters
and/or service policies, and systems where the specific service policy
threshold is not announced to the customers.

\bibliographystyle{agsm}

\end{document}